\documentclass[12pt, a4paper]{amsart}

\usepackage{amsmath,amsfonts,amsthm,amssymb,indentfirst,epic}

\usepackage{hyperref}

\oddsidemargin 0.1875 in \evensidemargin 0.1875in
\textwidth 6 in
\textheight 230mm \voffset=-4mm

\newtheorem{thm}{Theorem}[section]

\newtheorem{lem}[thm]{Lemma}

\newtheorem{prop}[thm]{Proposition}

\newtheorem{question}[thm]{Question}

\newtheorem{defin}[thm]{Definition}

\newtheorem{remark}[thm]{Remark}

\def\Q {{\mathbb Q}}

\def\N{{\mathbb N}}
\def\Z {{\mathbb Z}}

\newcommand\Char{{\operatorname{Char}}}

\newcommand\sg[1]{{\left<{#1}\right>}}
\newcommand\Supp{{\operatorname{Supp}}}

\DeclareMathOperator{\Aut}{Aut}
\DeclareMathOperator{\End}{End}

\newcommand{\ben}{\begin{enumerate}}
\newcommand{\een}{\end{enumerate}}
\newcommand{\bit}{\begin{itemize}}
\newcommand{\eit}{\end{itemize}}

\begin{document}
\title[]
{\large{The Starred Dixmier's conjecture}
 }
\author[]
{\small{Vered Moskowicz}}

\begin{abstract}
Dixmier's famous question says the following: Is every algebra endomorphism of the first Weyl algebra, $A_1(F)$, $\Char(F)= 0$, an automorphism?

Let $\alpha$ be the exchange involution on $A_1(F)$: $\alpha(x)= y$, $\alpha(y)= x$. An $\alpha$-endomorphism of $A_1(F)$ is an endomorphism which preserves the involution $\alpha$. 

Then one may ask the following question which may be called the ``$\alpha$-Dixmier's problem $1$" or the ``starred Dixmier's problem $1$": Is every $\alpha$-endomorphism of $A_1(F)$, $\Char(F)= 0$, an automorphism? 
\end{abstract}

\maketitle

\section{Introduction}
By definition, the $n$'th Weyl algebra $A_n(F)= A_n$ is the unital associative $F$-algebra generated by $2n$ elements $x_1, \ldots, x_n, y_1, \ldots, y_n$ subject to the following defining relations: 
$[y_i,x_j]= \delta_{ij}$, $[x_i,x_j]= 0$ and $[y_i,y_j]= 0$, where $\delta_{ij}$ is the Kronecker delta. 

In \cite{adja} Adjamagbo and van den Essen remarked that $A_1$ was first studied by Dirac in \cite{dirac}. Hence, they suggest to call it ``Dirac quantum algebra" instead of (first) Weyl algebra. Similarly, they suggest to call $A_n$  ``$n$'th Dirac quantum algebra" instead of $n$'th Weyl algebra. We truely do not know which name is better. For convenience, we shall continue to call $A_1$ the first Weyl algebra (and $A_n$ the $n$'th Weyl algebra).

In \cite{dixmier}, Dixmier asked six questions about the first Weyl algebra $A_1(F)$, where $F$ is a zero characteristic field; the first question is the following: Is every algebra endomorphism of $A_1(F)$ an automorphism? 

It is well known that over a zero characteristic field $F$, $A_1(F)$ is simple, hence every algebra endomorphism of $A_1(F)$ is necessarily injective (one-one). Therefore, Dixmier's first question can be rephrased as: Is every algebra endomorphism of $A_1(F)$ onto?

Usually, Dixmier's first question is brought as a conjecture; namely, Dixmier's conjecture says that every algebra endomorphism of $A_1(F)$ is an automorphism (= every algebra endomorphism of $A_1(F)$ is onto).

\section{The starred Dixmier's conjecture seems to be true, at least in some special cases}
We denote by $A_1$ the first Weyl algebra $A_1(F)= F\sg{x,y | yx-xy= 1}$, where $\Char(F)= 0$.

Obviously, for a mapping $f: A_1 \longrightarrow A_1$ to be an $F$-algebra homomorphism (endomorphism), it is enough that $[f(y),f(x)]= f(y)f(x)- f(x)f(y)= 1$.
Also, for a mapping $f: A_1 \longrightarrow A_1$ to be an $F$-algebra antihomomorphism (=antiendomorphism, namely, for every $a,b \in A_1$, $f(ab)= f(b)f(a)$), it is enough that $[f(y),f(x)]= f(y)f(x)- f(x)f(y)= -1$.

Notice that the following mapping $\alpha: A_1 \longrightarrow A_1$ is an involution on $A_1$: 
$\alpha(x)= y, \alpha(y)= x$. Indeed, $\alpha$ is an antihomomorphism of order $2$, so it is an antiautomorphism of order $2$. This mapping $\alpha$ is sometimes called the exchange involution.

Of course, there are other involutions on $A_1$. For example, given any automorphism $g$ of $A_1$, $g^{-1} \alpha g$ is clearly an involution on $A_1$. 

Generally, it is easy to see that each involution on $A_1$ is of the form $h \alpha$, where $h$ is an automorphism of $A_1$ which satisfies the following condition $h \alpha h \alpha= 1$ ($1$ is the identity map).
Indeed, let $\beta$ be any involution on $A_1$. Then $\beta \alpha$ is an automorphism of $A_1$, call it $h$. From $\beta \alpha= h$ follows $\beta= h \alpha$. Of course, since $\beta^2= 1$, we get $h \alpha h \alpha= 1$.

\begin{defin}\label{defin alpha endo}
An $\alpha$-endomorphism of $A_1$, $f$, is an endomorphism of $A_1$ which preserves the involution $\alpha$. Preserving the involution $\alpha$ means that for every $w \in A_1$, $f(\alpha(w))= \alpha(f(w))$.
So an $\alpha$-endomorphism of $A_1$, $f$, is an endomorphism of $A_1$ which commutes with $\alpha$ ($f\alpha= \alpha f$).
\end{defin}

It is easy to see that for every $w \in A_1$: $f(\alpha(w))= \alpha(f(w))$ $\Leftrightarrow$ $f(\alpha(x))= \alpha(f(x))$ and $f(\alpha(y))= \alpha(f(y))$ (write $w= \sum \alpha_{ij} x^iy^j$, $\alpha_{ij} \in F$. A direct computation shows that $f(\alpha(w))= \alpha(f(w))$, given $f(\alpha(x))= \alpha(f(x))$ and $f(\alpha(y))= \alpha(f(y))$).
Therefore, $f$ is an $\alpha$-endomorphism of $A_1$, if $f$ is an endomorphism of $A_1$, in which $f(\alpha(x))= \alpha(f(x))$ and $f(\alpha(y))= \alpha(f(y))$.

More generally, given any two involutions $\beta$ and $\gamma$ on $A_1$, one can define a $(\beta, \gamma)$-endomorphism of $A_1$ as an endomorphism of $A_1$ which preserves the involutions $\beta$ and $\gamma$. Preserving the involutions $\beta$ and $\gamma$ means that for every $w \in A_1$, $f(\beta(w))= \gamma(f(w))$.
If $\beta= \gamma$, then a $(\beta, \gamma)$-endomorphism is just a $\beta$-endomorphism.

However, we will only deal with the exchange involution $\alpha$; hence when we consider symmetric or antisymmetric elements of $A_1$, we mean with respect to $\alpha$.

Notice that the set of symmetric elements is $F$-linearly spanned by $\{x^ny^m + x^my^n | n \geq m \}$, 
while the set of antisymmetric elements is $F$-linearly spanned by $\{x^ny^m - x^my^n | n > m \}$.
Clearly, $\{x^ny^m + x^my^n | n \geq m \} \cup  \{x^ny^m - x^my^n | n > m \}$ is a basis of $A_1$ as a vector space over $F$.

Now, one may ask the ``$\alpha$-Dixmier's problem $1$" or the ``starred Dixmier's problem $1$": Is every $\alpha$-endomorphism of $A_1(F)$ ($\Char(F)=0$) an automorphism? 
Also, one may pose the ``$\alpha$-Dixmier's conjecture" or the ``starred Dixmier's conjecture": Every $\alpha$-endomorphism of $A_1(F)$ ($\Char(F)=0$) is an automorphism.

\begin{remark}
The exchange involution $\alpha$ may be denoted by ``$*$ on the right", instead of ``$\alpha$ on the left" (namely, $x^*= y$ and $y^*= x$ instead of $\alpha(x)= y$ and $\alpha(y)= x$), hence the name the ``starred Dixmier's problem $1$".
\end{remark}

In Theorem \ref{starred true}, we hopefully show that the starred Dixmier's conjecture is true in some special cases.
Our proof relies heavily on Joseph's results (especially \cite[Corollary 5.5]{joseph}). 

We will use some basic notions concerning $A_1(F)$, which can be found in \cite{dixmier}, \cite{joseph} and \cite{shape of possible counterexamples}.

We will need to extend $A_1= A_1(F)$ to $A_1^{(m)}$ ($m$ is a positive integer) as Joseph did in \cite[Section 3]{joseph}, where $A_1^{(m)}$ is the unital associative $F$-algebra generated by $y, x, x^{- 1/m}$.

Recall the following notions (for more details, see \cite[Definition 1.1, Notations 1.2 and Notations 1.3]{shape of possible counterexamples}):

Let $F[X,Y]$ be the commutative polynomial algebra in two variables and let 
$\Theta: A_1\longrightarrow F[X,Y]$ be the $F$-linear map defined by $\Theta(\sum \alpha_{ij}x^iy^j):= \sum \alpha_{ij}X^iY^j$.

Let $E=: \{(r,s) \in (\Z)^2 | \gcd(r,s)=1 , r+s \geq 0\}$.

For all $(r,s) \in E$ and $(i,j) \in (\N)^2$ (in view of Joseph's extension, we will need to consider also $(i,j) \in (\N)\times(\Z)$), let $\nu_{r,s}(i,j):= ri+sj$.

Fix $(r,s) \in E$. 
For $0 \neq P= \sum \alpha_{ij}X^iY^j \in F[X,Y]$, we define:
\bit
\item The support of $P$, $\Supp(P)$, as $\{(i,j) | \alpha_{ij}\neq 0\}$ (The support of $P$ is, of course, independent of the choice of $(r,s)$).
\item The $(r,s)$ degree of $P$, $\nu_{r,s}(P)$, as the maximum of the set 
$\{ \nu_{r,s}(i,j) | \alpha_{ij}\neq 0\}= \{ \nu_{r,s}(i,j) | (i,j)\in \Supp(P)\}$. 
\item The $(r,s)$ leading term of $P$, $l_{r,s}(P)$, 
as $\sum_{\nu_{r,s}(i,j)= \nu_{r,s}(P)} \alpha_{ij}X^iY^j$.
\eit

For example, if $(r,s)=(1,1)$ and $P= 2+7XY+20X^3Y^5-X^4Y^4$, then: $\Supp(P)=\{(0,0),(1,1),(3,5),(4,4)\}$, 
$\nu_{1,1}(P)= 8$ and $l_{r,s}(P)= 20X^3Y^5-X^4Y^4$.

For $0 \neq u \in A_1$, we define:
\bit
\item The support of $u$, $\Supp(u)$, as $Supp(\Theta(u))$.
\item The $(r,s)$ degree of $u$, $\nu_{r,s}(u)$, as $\nu_{r,s}(\Theta(u))$.
\item The $(r,s)$ leading term of $u$, $l_{r,s}(u)$, as $l_{r,s}(\Theta(u))$.
\item $w(u)= (i_0,i_0-\nu_{1,-1}(u))$, where 
$i_0$ is the maximum of the first component from pairs appearing in the support of $l_{1,-1}(u)$.
\eit

Our notations will be as in \cite[Notations 1.2, 1.3, 1.4]{shape of possible counterexamples}. Notice that only $A_1$ is considered in that paper. We will use those notations also for $A_1^{(m)}$. 
In particular, for $0 \neq u \in A_1^{(m)}$, the $(r,s)$ degree of $u$ will be denoted by $\nu_{r,s}(u)$ and the $(r,s)$ leading term of $u$ will be denoted by $l_{r,s}(u)$. 
Here, $(r,s)$ will be $(1, -1)$, $(1, 1)$ or $(1, 0)$.

For more details about those notations for $A_1^{(m)}$ (and for additional results), see \cite{shape of possible counterexamples version 3}.

\begin{prop}\label{degree}
Let $u,v \in A_1^{(m)} \ 0$. Then $\nu_{1, -1}(uv)= \nu_{1, -1}(u)+ \nu_{1, -1}(v)$.
\end{prop}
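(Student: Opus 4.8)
The plan is to recognize $\nu_{1,-1}$ as the top-degree function of a genuine $\tfrac1m\mathbb{Z}$-grading on $A_1^{(m)}$, and then to reduce the claimed additivity to the statement that leading terms multiply without cancellation. First I would assign the weights $w(x)=1$, $w(y)=-1$ and $w(x^{-1/m})=-\tfrac1m$, so that a normal-form monomial $x^iy^j$ (with $i\in\tfrac1m\mathbb{Z}$, $j\in\mathbb{N}$) has weight $\nu_{1,-1}(i,j)=i-j$. The decisive point, and the reason the pair $(1,-1)$ is singled out, is that every defining relation of $A_1^{(m)}$ is homogeneous of weight $0$ for this assignment: indeed $\nu_{1,-1}(yx)=0=\nu_{1,-1}(xy)=\nu_{1,-1}(1)$, and likewise the relation governing $x^{-1/m}$, namely $[y,x^{-1/m}]=-\tfrac1m x^{-1/m-1}$, is homogeneous. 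Consequently the ideal of relations is homogeneous and $A_1^{(m)}=\bigoplus_{d\in\frac1m\mathbb{Z}}V_d$ is a graded algebra, where $V_d$ is the linear span of the monomials of weight $d$. By definition $\nu_{1,-1}(u)$ is then the largest $d$ for which the component of $u$ in $V_d$ is nonzero, and $l_{1,-1}(u)$ is exactly that top homogeneous component, a nonzero element of $V_D$ with $D=\nu_{1,-1}(u)$.

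Next, writing $D=\nu_{1,-1}(u)$ and $E=\nu_{1,-1}(v)$ and decomposing $u$ and $v$ into homogeneous components, the weight-$(D+E)$ component of $uv$ is $l_{1,-1}(u)\,l_{1,-1}(v)$ (the only pair of maximal components whose weights sum to $D+E$), and $uv$ has no component of weight exceeding $D+E$. Hence $\nu_{1,-1}(uv)\le D+E$, with equality precisely when $l_{1,-1}(u)\,l_{1,-1}(v)\neq 0$; in that case one simultaneously reads off the bonus identity $l_{1,-1}(uv)=l_{1,-1}(u)\,l_{1,-1}(v)$.

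The main obstacle is therefore the non-cancellation step: showing that the product of the two nonzero leading terms is nonzero. The cleanest route is to invoke that $A_1^{(m)}$ is a domain (a standard fact for Joseph's extension; see the companion paper), which immediately forces $l_{1,-1}(u)\,l_{1,-1}(v)\neq 0$. If one prefers a self-contained argument, I would instead use the normal form of a homogeneous element, noting first that $x$ is invertible in $A_1^{(m)}$ (its inverse is $(x^{-1/m})^m$). Every element of $V_d$ can be written as $x^d\,p(h)$ for a unique $p\in F[h]$, where $h:=xy$; here one moves all powers of $x$ to the left using $hx=x(h+1)$, i.e.\ $p(h)\,x^{e}=x^{e}\,p(h+e)$ for $e\in\tfrac1m\mathbb{Z}$. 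Thus $l_{1,-1}(u)\,l_{1,-1}(v)=x^{D+E}\,p(h+E)\,q(h)$ with $p,q\neq 0$, and since $F[h]$ is a domain and $x^{D+E}$ is a unit, this product is nonzero, which completes the argument.

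It is worth stressing that the whole argument hinges on the weight-$0$ homogeneity of the commutation relation, which is special to $(1,-1)$: for $(1,1)$, say, one has $\nu_{1,1}(yx)=2\neq 0=\nu_{1,1}(1)$, the grading breaks, and leading terms may cancel, so additivity is claimed only here. I expect the grading observation to be routine once the weights are fixed, and the genuine content to lie entirely in the non-vanishing of $l_{1,-1}(u)\,l_{1,-1}(v)$, handled by either of the two routes above.
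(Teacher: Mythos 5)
Your proof is correct, and while its key observation coincides with the paper's, you take a genuinely more self-contained route. The paper disposes of the proposition by citation: it invokes \cite[Proposition 1.9(3)]{shape of possible counterexamples} and extends it to $A_1^{(m)}$ via the single remark that the rewriting rule $x^{l/m}y \mapsto yx^{l/m}-(l/m)x^{l/m-1}$ leaves the $(1,-1)$-degree of both terms unchanged --- which is exactly your statement that the defining relations are homogeneous of weight $0$, so that $(1,-1)$ induces a genuine $\tfrac{1}{m}\mathbb{Z}$-grading on $A_1^{(m)}$. What you supply, and the paper outsources, is the non-cancellation step: your normal form $V_d = x^d F[h]$ with $h=xy$, together with $p(h)x^e = x^e p(h+e)$, reduces it to $F[h]$ being a domain and $x$ a unit, and as a bonus yields the leading-term identity $l_{1,-1}(uv)=l_{1,-1}(u)\,l_{1,-1}(v)$, which the paper tacitly uses later (e.g.\ when computing $l_{1,-1}(\psi^{(m)}(f(y)))=\gamma (A_l)^n$ in Theorem \ref{starred true}) but never isolates. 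Two small points. First, the commutation $p(h)x^e=x^e p(h+e)$ for fractional $e$ does not follow from $hx=x(h+1)$ alone; you need $hx^{1/m}=x^{1/m}(h+\tfrac1m)$, which is a one-line consequence of Joseph's relation $yx^{l/m}=x^{l/m}y+(l/m)x^{l/m-1}$ --- worth writing out, since the whole point of the proposition is the passage from $A_1$ to $A_1^{(m)}$. Second, your closing motivational remark overreaches: for $(1,1)$ the grading does break, but additivity of $\nu_{1,1}$ nevertheless holds (this is \cite[Proposition 1.9(2)]{shape of possible counterexamples}, via the filtration whose associated graded ring is the commutative polynomial algebra, a domain); what is special to $(1,-1)$ is only that the leading term is an honest homogeneous component of a graded algebra, not that degree additivity fails otherwise. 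Neither point affects the validity of your argument.
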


\begin{proof}
This is \cite[Proposition 1.9(3)]{shape of possible counterexamples}, extended to $A_1^{(m)}$.

One has to replace $x^{l/m}y$ by $yx^{l/m}- (l/m)x^{l/m - 1}$ ($l \in \Z$), see \cite[page 603]{joseph} (the four lines between Proposition $3.2$ and Proposition $3.3$). Notice that there $\rho+ \sigma >0$ while here $\rho= 1, \sigma= -1$, and here both the first term and the second term has unaltered $(1, -1)$ degree.
\end{proof}

\begin{defin}
We say that an $\alpha$-endomorphism $f$ of $A_1$ has degree $n$, if the $(1, 1)$ degree of $f(x)$ is $n$ 
($\nu_{1, 1}(f(x))= n$).
\end{defin}

\begin{remark}
Obviously, if $f$ is an $\alpha$-endomorphism of $A_1$, then: 
The $(1, 1)$ degree of $f(x)$ is $n$ $\Leftrightarrow$ The $(1, 1)$ degree of $f(y)$ is $n$.
\end{remark}

The following useful lemma relies on \cite[Theorem 1.22(2)]{shape of possible counterexamples}. It will be used in the proof of Theorem \ref{starred true} and in the proof of Theorem \ref{prime degree}.

\begin{lem}\label{useful lemma}
Let $f$ be an $\alpha$-endomorphism of $A_1$ of degree $> 1$. Let $f(x)= P$ (hence $f(y)= P^*$). 
Then the $(1, 1)$ leading term of $P$, $l_{1, 1}(P)$, is symmetric or antisymmetric.
\end{lem}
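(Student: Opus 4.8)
The plan is to combine the endomorphism relation with the $\alpha$-equivariance of $f$ and then feed the result into \cite[Theorem 1.22(2)]{shape of possible counterexamples}. First I would record the two facts that drive everything. Because $f$ is an $\alpha$-endomorphism we have $f(y)= f(\alpha(x))= \alpha(f(x))= P^{*}$, and because $f$ is an endomorphism we have the Weyl relation $[f(y),f(x)]= P^{*}P- PP^{*}= 1$. I would also observe that $\alpha$ sends a monomial $x^{i}y^{j}$ to $x^{j}y^{i}$ and therefore preserves the $(1,1)$ degree: it maps the homogeneous component of $(1,1)$ degree $d$ to the homogeneous component of the same degree. Consequently $\nu_{1,1}(P^{*})= \nu_{1,1}(P)= n$, and writing $L:= l_{1,1}(P)$ and splitting $P= L+ (\text{terms of } (1,1) \text{ degree} < n)$, the leading term transforms as $l_{1,1}(P^{*})= l_{1,1}(\alpha(P))= \alpha(L)= L^{*}$.

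Next I would invoke \cite[Theorem 1.22(2)]{shape of possible counterexamples}, applied to the endomorphism $f$ of degree $n> 1$, to conclude that the two $(1,1)$ leading terms $l_{1,1}(f(x))= L$ and $l_{1,1}(f(y))= L^{*}$ are proportional over $F$; that is, $L= cL^{*}$ for some nonzero scalar $c\in F$. Heuristically this is forced as follows: passing to the associated graded algebra $F[X,Y]$ (equipped with the Poisson bracket induced by $[y,x]= 1$), the top graded piece of $[P^{*},P]$, sitting in $(1,1)$ degree $2n-2$, equals the Poisson bracket of the leading symbols $\Theta(L^{*})$ and $\Theta(L)$. Since $[P^{*},P]= 1$ has $(1,1)$ degree $0$ while $2n-2> 0$, this bracket must vanish, and two nonzero homogeneous polynomials of equal degree whose Poisson bracket vanishes are $F$-proportional. (One checks that $c$ lies in $F$ rather than merely in $\overline{F}$ by comparing any nonzero coefficient of the two polynomials $L$ and $L^{*}$, both of which have coefficients in $F$.)

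Finally I would pin down the scalar $c$ using that $\alpha$ is an involution fixing $F$ pointwise. Applying $\alpha$ to $L= cL^{*}$ gives $L^{*}= \alpha(L)= \alpha(cL^{*})= c\,\alpha(L^{*})= cL$, using $\alpha(L^{*})= \alpha(\alpha(L))= L$ and $c\in F$. Substituting $L^{*}= cL$ back into $L= cL^{*}$ yields $L= c^{2}L$, whence $c^{2}= 1$ and $c= \pm 1$. If $c= 1$ then $L= L^{*}$, so $L= l_{1,1}(P)$ is symmetric; if $c= -1$ then $L= -L^{*}$, so $L$ is antisymmetric. This is exactly the assertion of the lemma.

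The main obstacle is establishing the proportionality $L= cL^{*}$ rigorously, and this is precisely the content supplied by \cite[Theorem 1.22(2)]{shape of possible counterexamples}; the genuine work in the proof is therefore the reduction of the lemma to that theorem, namely the identification $l_{1,1}(f(y))= L^{*}$ obtained from the $\alpha$-equivariance of $f$ together with the degree-preserving property of $\alpha$. Once proportionality is in hand, the remaining step is only the involution bookkeeping above, which is routine provided one is careful that $c\in F$ so that $\alpha(cL^{*})= cL$.
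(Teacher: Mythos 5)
Your proposal is correct and follows essentially the same route as the paper: both reduce the statement to \cite[Theorem 1.22(2)]{shape of possible counterexamples} via $[P^*,P]=\pm 1$ together with $\nu_{1,1}(P)=\nu_{1,1}(P^*)=n>1$ (which forces $M=N=1$, so that $l_{1,1}(P)$ and $l_{1,1}(P^*)$ are both scalar multiples of a common homogeneous $R$, i.e.\ proportional). The only difference is in the bookkeeping at the end: you obtain $c^2=1$ by applying the involution $\alpha$ to the relation $L=cL^*$, whereas the paper decomposes $R$ into its symmetric and antisymmetric parts and runs a short case analysis --- your endgame is slightly cleaner but not a genuinely different method.
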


\begin{remark}\label{degree 1 is auto}
It is easy to see that when $f$ is an $\alpha$-endomorphism of $A_1$ of degree $1$, 
then $f(x)= Ax+By+C, f(y)= Bx+Ay+C$, with $A^2- B^2= 1$. The $(1, 1)$ leading term of $f(x)$, which is $Ax+By$, is not symmetric or antisymmetric. 

Clearly, such $f$ is an automorphism.
\end{remark}

\begin{proof}
Let $n >1$ be the degree of $f$. Then we can write $P= E_n+ E_{n-1}+ \ldots + E_1+ E_0$, where $E_n \neq 0$ and $E_i$ ($0 \leq i \leq n$) is $(1, 1)$ homogeneous of $(1, 1)$ degree $i$. Since $f$ is an $\alpha$-endomorphism of $A_1$, 
$1= [f(y), f(x)]= [P^*, P]$.

We can apply \cite[Theorem 1.22(2)]{shape of possible counterexamples}. Indeed, $[P, P^*]= -1$, 
hence $[P, P^*]_{1, 1}= 0$, since $\nu_{1, 1}([P, P^*])= \nu_{1, 1}(-1)= 0 < \nu_{1, 1}(P) +\nu_{1, 1}(P^*) - 2$ (we assumed that $f$ is of degree $> 1$. Also, we have remarked that $\nu_{1, 1}(P)= \nu_{1, 1}(P^*)$).

Therefore, there exist $0 \neq \lambda \in F$ and $0 \neq \mu \in F$, $M,N \in \N$ with $\gcd(M,N)= 1$ and a $(1, 1)$ homogeneous polynomial $R \in F[X,Y]$ ($F[X,Y]$ is the commutative polynomial $F$-algebra), such that:
$M/N= \nu_{1, 1}(P) / \nu_{1, 1}(P^*)$, the $(1, 1)$ leading term of $P$ is $\lambda R^M$ and the $(1, 1)$ leading term of $P^*$ is $\mu R^N$. 

(Remark: when we write $R^M$ as an element of $A_1$, we mean the computation of $R^M$ in $F[X,Y]$ and then replacing $X,Y$ by $x,y$. For example, if $R= X- Y$, then $R^2$ as an element of $A_1$ is $x^2- 2xy+ y^2$). 

{}From $\nu_{1, 1}(P)= \nu_{1, 1}(P^*)= n$, we get $M/N= \nu_{1, 1}(P) / \nu_{1, 1}(P^*)= n/n= 1$, so $M=N$.
But $1= \gcd(M,N)= \gcd(M,M)= M$, so $M= N= 1$.
Hence, the $(1, 1)$ leading term of $P$ is $\lambda R$ and the $(1, 1)$ leading term of $P^*$ is $\mu R$. 

Of course, the $(1, 1)$ leading term of $P$ is $E_n$ and the leading $(1, 1)$ term of $P^*$ is $(E_n)^*$. 
Therefore, $E_n= \lambda R$ and $(E_n)^*= \mu R$.

Hence, $\mu R= (E_n)^*= (\lambda R)^*$, and we have $\mu R= (\lambda R)^*= \lambda(R^*)$, 
so $R^*= \lambda^{-1} \mu R$.

Write $R=S+ K$, where $S$ is symmetric and $K$ is antisymmetric 
-just take $S= (R+ R^*)/2$ and $K=(R- R^*)$.

Hence, $(S+K)^*= \lambda^{-1} \mu (S+K)$, so
$S- K= \lambda^{-1} \mu (S+ K)$. Then $S- K= \lambda^{-1} \mu S+ \lambda^{-1} \mu K$, 
implying $S- \lambda^{-1} \mu S= K+ \lambda^{-1} \mu K$, so
$(1- \lambda^{-1} \mu) S= (1+ \lambda^{-1} \mu) K$. Since an element which is both symmetric and antisymmetric must be the zero element, we get $(1- \lambda^{-1} \mu) S= 0$ and $(1+ \lambda^{-1} \mu) K= 0$.

\bit
\item If $K= 0$, then $R= S$. $R \neq 0$ (otherwise, $R=0$, hence $E_n= \lambda R= 0$, a contradiction), 
so $S \neq 0$, which implies that $(1- \lambda^{-1} \mu)= 0$. Therefore, $\mu= \lambda$. 
Hence we have, $E_n= \lambda R= \lambda S$ and $(E_n)^*= \mu R= \mu S= \lambda S$, which shows that $E_n$ is symmetric.

\item If $S= 0$, then $R= K$. $R \neq 0$ (otherwise, $R=0$, hence $E_n= \lambda R= 0$, a contradiction), 
so $K \neq 0$, which implies that $(1+ \lambda^{-1} \mu)= 0$. Therefore, $\mu= -\lambda$. 
Hence we have, $E_n= \lambda R= \lambda K$ and $(E_n)^*= \mu R= \mu K= -\lambda K$, which shows that $E_n$ is antisymmetric.

\item If $K \neq 0$ and $S \neq 0$, then $1+ \lambda^{-1} \mu= 0$ and $1- \lambda^{-1} \mu= 0$. 
Then $\mu= -\lambda$ and $\mu= \lambda$, which is impossible (since $\lambda \neq 0$).
\eit

\end{proof}
As a first step, one may wish to find an example of an $\alpha$-endomorphism of $A_1$ of degree $> 1$.

In the following proposition we describe a family of $\alpha$-endomorphisms of any even degree (this family also includes degree $1$ endomorphisms),
which is actually a family of $\alpha$-automorphisms:

\begin{prop}\label{family of alpha auto}
The following $f$ is an $\alpha$-endomorphism (automorphism):

$f(x)= c+ [(4b^2+1)/4b]x+ [(4b^2-1)/4b]y+ \sum_{j= 1,2,3,\ldots} a_{2j}\tilde{S}_{2j}$

$f(y)= c+ [(4b^2+1)/4b]y+ [(4b^2-1)/4b]x+ \sum_{j= 1,2,3,\ldots} a_{2j}\tilde{S}_{2j}$,

where \bit
\item $F \ni b \neq 0$. 
\item $F \ni c, a_2, a_4, a_6, \ldots$, with only finitely many nonzero scalars from this set.
\item $\tilde{S}_{2j}$ is symmetric and $(1,1)$ homogeneous of $(1, 1)$ degree $2j$ of the special following form:
$A_1 \ni \tilde{S}_{2j}= (X-Y)^{2j}$, with $(X-Y)^{2j}$ = the computation of $(X-Y)^{2j}$ in $F[X,Y]$ and then replacing $X,Y$ by $x,y$. 
\eit
\end{prop}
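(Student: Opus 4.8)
The plan is to check the two conditions defining an $\alpha$-endomorphism — that $f$ is an endomorphism ($[f(y),f(x)]=1$) and that $f\alpha=\alpha f$ — and then to upgrade to an automorphism by recovering $x,y$ from $f(x),f(y)$. I would first abbreviate $p=(4b^2+1)/4b$ and $q=(4b^2-1)/4b$, noting $p-q=1/(2b)$, $p+q=2b$, hence $p^2-q^2=(p-q)(p+q)=1$. With $T=\sum_j a_{2j}\tilde S_{2j}$, $L_1=px+qy$ and $L_2=qx+py$ we have $f(x)=c+L_1+T$ and $f(y)=c+L_2+T$, and since $c$ is central and $[T,T]=0$, bilinearity gives
\[
[f(y),f(x)]=[L_2,L_1]+[L_2-L_1,\,T].
\]
Expanding with $[y,x]=1$ (so $[x,y]=-1$) yields $[L_2,L_1]=p^2-q^2=1$, while $L_2-L_1=-(p-q)(x-y)=-\tfrac1{2b}(x-y)$. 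Thus $[f(y),f(x)]=1-\tfrac1{2b}\,[x-y,\,T]$, and the entire endomorphism claim reduces to showing that each $\tilde S_{2j}$ commutes with $x-y$.

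The heart of the argument is the identity $[x-y,\tilde S_{2j}]=0$, which I would obtain from the inner derivation $\operatorname{ad}_{x-y}=[x-y,\,\cdot\,]$. From $[x-y,x]=[x-y,y]=-1$, Leibniz applied to a standard monomial gives $\operatorname{ad}_{x-y}(x^iy^j)=-i\,x^{i-1}y^j-j\,x^iy^{j-1}$, which stays normally ordered; so under the linear isomorphism $\Theta$ the operator $\operatorname{ad}_{x-y}$ corresponds exactly to $-(\partial_X+\partial_Y)$ on $F[X,Y]$. Since $\tilde S_{2j}=\Theta^{-1}((X-Y)^{2j})$ and $(\partial_X+\partial_Y)(X-Y)^{2j}=0$, we conclude $\operatorname{ad}_{x-y}(\tilde S_{2j})=0$, whence $[x-y,T]=0$ and $[f(y),f(x)]=1$. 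I expect this commutation step to be the main obstacle, since it is the one place where the precise shape $\tilde S_{2j}=(X-Y)^{2j}$ enters in an essential way.

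For the involution, $\alpha(x^ay^b)=x^by^a$ together with the evenness of $2j$ gives $\alpha(\tilde S_{2j})=\tilde S_{2j}$ (the symmetry asserted in the statement), so $F$-linearity of $\alpha$ with $\alpha(x)=y$, $\alpha(y)=x$ yields $\alpha(f(x))=f(y)$ and $\alpha(f(y))=f(x)$; by the generator criterion recorded after Definition~\ref{defin alpha endo} this is exactly $f\alpha=\alpha f$, so $f$ is an $\alpha$-endomorphism. Finally, for surjectivity I would recover the generators: $x-y=2b\,(f(x)-f(y))$ lies in $\operatorname{Im}f$; each $\tilde S_{2j}$ commutes with $x-y$ and hence lies in $F[x-y]$ (the centralizer of $x-y$, which is conjugate to $x$ via $x\mapsto x-y,\ y\mapsto y$, is $F[x-y]$), so $T\in\operatorname{Im}f$; therefore $x+y=\tfrac1{2b}(f(x)+f(y)-2c-2T)\in\operatorname{Im}f$ as well, giving $x,y\in\operatorname{Im}f$. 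Thus $f$ is onto, and since $A_1$ is simple $f$ is injective, so $f$ is an automorphism.
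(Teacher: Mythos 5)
Your proof is correct, and while its algebraic core coincides with the paper's, it genuinely diverges in one place. For the endomorphism claim the paper first records a ``trivial fact'' (if $S$ is symmetric, $K$ antisymmetric and $[K,S]=-1/2$, then $h(x)=S+K$, $h(y)=S-K$ is an $\alpha$-endomorphism) and then takes $K=(1/4b)(x-y)$, $S=c+b(x+y)+\sum_j a_{2j}\tilde S_{2j}$; your direct bilinear expansion $[f(y),f(x)]=[L_2,L_1]+[L_2-L_1,T]$ reaches the same reduction to $[x-y,T]=0$ without the symmetric/antisymmetric packaging, and this also absorbs the involution check that the trivial fact handles in the paper. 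Your identification of $\operatorname{ad}_{x-y}$ with $-(\partial_X+\partial_Y)$ under $\Theta$ (via Leibniz and the observation that the result stays normally ordered) is exactly the computation the paper performs when it verifies $(\partial/\partial x+\partial/\partial y)\tilde S_{2j}=0$, so that step is the same idea, just derived more carefully. The real difference is surjectivity: the paper recovers the generators by iterated explicit squarings ($x-y$, then $(x-y)^2$ yields $\tilde S_2$ up to a constant, then $\tilde S_4$, then $b(x+y)$), carried out only for a degree-$4$ representative, with the general member dismissed as ``similar computations''; you instead observe that each $\tilde S_{2j}$ commutes with $x-y$, invoke $\Cent(x-y)=F[x-y]$ (obtained by conjugating the standard fact $\Cent(x)=F[x]$ by the automorphism $x\mapsto x-y$, $y\mapsto y$), and conclude $T\in F[x-y]\subseteq f(A_1)$ in one stroke once $x-y=2b(f(x)-f(y))\in f(A_1)$. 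This buys a uniform argument valid for every member of the family, closing a gap the paper only sketches --- interestingly, the paper's own ``another way'' remark, that $a_2\tilde S_2+a_4\tilde S_4\in F[x-y]$, contains the seed of your argument but is never exploited for surjectivity; in exchange, the paper's version is entirely elementary and needs no centralizer input. All your intermediate identities check out: $p^2-q^2=1$, $p-q=1/(2b)$, $\alpha(\tilde S_{2j})=\tilde S_{2j}$ because $2j$ is even, and injectivity from simplicity of $A_1$.
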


For example,
$f(x)= c+ [(4b^2+1)/4b]x+ [(4b^2-1)/4b]y+ a_2\tilde{S}_2 + a_4\tilde{S}_4$, 

$f(y)= c+ [(4b^2+1)/4b]y+ [(4b^2-1)/4b]x+ a_2\tilde{S}_2 + a_4\tilde{S}_4$, 

where $b \neq 0$, $\tilde{S}_2= (x^2+y^2)-2xy $, $\tilde{S}_4= (x^4+y^4)- 4(x^3y+xy^3)+ 6x^2y^2$.

\begin{proof}
Notice the following trivial fact:
Let $S$ be a symmetric element of $A_1$ and $K$ an antisymmetric element of $A_1$.
If $-(1/2)= [K,S]$, then $h(x)= S+K$, $h(y)= S-K$ is an $\alpha$-endomorphism of $A_1$.

Indeed, $h(x^*)= h(y)= S-K= (S+K)^*= (h(x))^*$ , $h(y^*)= h(x)= S+K= (S-K)^*= (h(y))^*$ and
$[h(y),h(x)]= [S-K,S+K]= [S,K]-[K,S]= -[K,S]-[K,S]= -2[K,S]= -2{-(1/2)}= 1$.

In view of the above trivial fact, in order to show that $f$ is an $\alpha$-endomorphism of $A_1$, it is enough to find a symmetric element $S$ and an antisymmetric element $K$ such that $-(1/2)= [K,S]$, 
and $f(x)= S+K$, $f(y)= S-K$.

Let $K= (1/4b)(x-y)$ and $S= c+ b(x+y)+ \sum_{j= 1,2,3,\ldots} a_{2j}\tilde{S}_{2j}$.
Of course, $K$ is antisymmetric and $S$ is symmetric (remember that the $\tilde{S}_{2j}$'s are symmetric).

$S+K= c+ b(x+y)+ \sum_{j= 1,2,3,\ldots} a_{2j}\tilde{S}_{2j}+ (1/4b)(x-y)=
c+ b(x+y)+ (1/4b)(x-y) + \sum_{j= 1,2,3,\ldots} a_{2j}\tilde{S}_{2j}=
c+ (4b^2/4b)(x+y)+ (1/4b)(x-y) + \sum_{j= 1,2,3,\ldots} a_{2j}\tilde{S}_{2j}=
c+ (4b^2+1/4b)x + (4b^2-1/4b)y + \sum_{j= 1,2,3,\ldots} a_{2j}\tilde{S}_{2j}=
f(x)$.

$S-K= c+ b(x+y)+ 
\sum_{j= 1,2,3,\ldots} a_{2j}\tilde{S}_{2j}- (1/4b)(x-y)=$
$c+ b(x+y)- (1/4b)(x-y) + \sum_{j= 1,2,3,\ldots} a_{2j}\tilde{S}_{2j}=$
$c+ (4b^2/4b)(x+y)- (1/4b)(x-y) + \sum_{j= 1,2,3,\ldots} a_{2j}\tilde{S}_{2j}=$
$c+ [(4b^2-1)/4b]x + [(4b^2+1)/4b]y + \sum_{j= 1,2,3,\ldots} a_{2j}\tilde{S}_{2j}=
f(y)$.

It remains to show that $-(1/2)= [K,S]$.

$[K,S]= [(1/4b)(x-y), c+ b(x+y)+ \sum_{j= 1,2,3,\ldots} a_{2j}\tilde{S}_{2j}]=$
$(1/4)[x-y,x+y] + (1/4b)[x-y,\sum_{j= 1,2,3,\ldots} a_{2j}\tilde{S}_{2j}]=$
$(1/4)([x,y]-[y,x]) + (1/4b)[x-y,\sum_{j= 1,2,3,\ldots} a_{2j}\tilde{S}_{2j}]=$
$(1/4){-1-1} + (1/4b)[x-y,\sum_{j= 1,2,3,\ldots} a_{2j}\tilde{S}_{2j}]=$
$-(1/2) + (1/4b)[x-y,\sum_{j= 1,2,3,\ldots} a_{2j}\tilde{S}_{2j}]$.

\textbf{Claim}: $[x-y, \sum_{j= 1,2,3,\ldots} a_{2j}\tilde{S}_{2j}]= 0$.
\textbf{Proof of claim}: It is well known that for every $w \in A_1$, 
$[x,w]= -(\partial / \partial y)(w)$ and $[y,w]= (\partial / \partial x)(w)$.
So, $[x-y, \sum_{j= 1,2,3,\ldots} a_{2j}\tilde{S}_{2j}]= 
[x, \sum_{j= 1,2,3,\ldots} a_{2j}\tilde{S}_{2j}] - [y, \sum_{j= 1,2,3,\ldots} a_{2j}\tilde{S}_{2j}]=$

$-(\partial / \partial y)(\sum_{j= 1,2,3,\ldots} a_{2j}\tilde{S}_{2j}) - 
(\partial / \partial x)(\sum_{j= 1,2,3,\ldots} a_{2j}\tilde{S}_{2j})=$

$- {(\partial / \partial y)(\sum_{j= 1,2,3,\ldots} a_{2j}\tilde{S}_{2j}) + 
(\partial / \partial x)(\sum_{j= 1,2,3,\ldots} a_{2j}\tilde{S}_{2j})}$.

It is not difficult to show that for each $j$: 
$(\partial / \partial y)(a_{2j}\tilde{S}_{2j}) + (\partial / \partial x)(a_{2j}\tilde{S}_{2j})= 0$, 
Therefore, 
$(\partial / \partial y)(\sum_{j= 1,2,3,\ldots} a_{2j}\tilde{S}_{2j}) + 
(\partial / \partial x)(\sum_{j= 1,2,3,\ldots} a_{2j}\tilde{S}_{2j})= 0$.

\textbf{For example}: 
$\tilde{S}_2= (x^2+y^2)-2xy$:
$(\partial / \partial y)(\tilde{S}_{2}) = (\partial / \partial y)(x^2+y^2-2xy)= 2y-2x$.
$(\partial / \partial x)(\tilde{S}_{2})= (\partial / \partial x)(x^2+y^2-2xy)= 2x-2y$.

$\tilde{S}_4= (x^4+y^4)- 4(x^3y+xy^3)+ 6x^2y^2$:

$(\partial / \partial y)(\tilde{S}_4) = $

$(\partial / \partial y)[x^4+y^4- 4(x^3y+xy^3)+ 6x^2y^2]=$

$4y^3-4x^3-12xy^2+12x^2y$.

$(\partial / \partial x)(\tilde{S}_4)= $

$(\partial / \partial x)[x^4+y^4- 4(x^3y+xy^3)+ 6x^2y^2]=$

$4x^3-12x^2y-4y^3+12xy^2$.

Another way which shows that $(1/2)= [S,K]$:

$\tilde{S}_2= x^2+y^2-2xy= (x-y)^2+1$ and 

$\tilde{S}_4= (x^4+y^4)-4(x^3y+xy^3)+6x^2y^2= (x-y)^4+4(x-y)^2+4$.

Then, $a_2\tilde{S}_2+a_4\tilde{S}_4=$

$a_2[(x-y)^2+1]+a_4[(x-y)^4+4(x-y)^2+4]=$

$a_4(x-y)^4+[a_2+4a_4](x-y)^2+[a_2+4a_4] \in F[x-y]$, 

hence 

$[a_2\tilde{S}_2+a_4\tilde{S}_4, x-y]= 0$.

Therefore, $[S,K]= 
[c+ b(x+y)+ a_2\tilde{S}_{2}+ a_4\tilde{S}_4, (1/4b)(x-y)]=$

$[b(x+y),(1/4b)(x-y)]+[a_2\tilde{S}_2+ a_4\tilde{S}_4, (1/4b)(x-y)]=$

$(1/4)[x+y,x-y]+ 0= (1/4){1+1}= (1/2)$.

Finally, one can show that a general $f$ from the family is onto (we will not show this). 
We just show that the following $f$ of degree (at most) $4$ is onto:
$f(x)= c+ [(4b^2+1)/4b]x+ [(4b^2-1)/4b]y+ a_2\tilde{S}_2 + a_4\tilde{S}_4$
$f(y)= c+ [(4b^2+1)/4b]y+ [(4b^2-1)/4b]x+ a_2\tilde{S}_2 + a_4\tilde{S}_4$
(where $b \neq 0$, $\tilde{S}_2= (x^2+y^2)-2xy $, $\tilde{S}_4= (x^4+y^4)- 4(x^3y+xy^3)+ 6x^2y^2$).

$f(A_1) \ni (f(x)-f(y))/2= ([(4b^2+1)/4b]x+ [(4b^2-1)/4b]y- [(4b^2+1)/4b]y- [(4b^2-1)/4b]x)/2=$
$([(4b^2+1)/4b- (4b^2-1)/4b]x + [(4b^2-1)/4b - (4b^2+1)/4b]y)/2=$
$([2/4b]x + [-2/4b]y)/2=$
$([1/2b]x + [-1/2b]y)/2=
(1/4b)(x-y)$.
 
So, $(x-y) \in f(A_1)$.

$f(A_1) \ni (x-y)(x-y)= x^2-xy-yx+y^2= x^2-xy-xy-1+y^2= x^2+y^2-2xy-1$ 

(of course, $(x-y)(x-y)$ is computed in $A_1$), hence (since $f(A_1) \ni 1$)

$f(A_1) \ni x^2+y^2-2xy$.

$f(A_1) \ni (2^{-1})(f(x)+f(y))= $
$(2^{-1})[c+ [(4b^2+1)/4b]x+ [(4b^2-1)/4b]y+ a_2\tilde{S}_2 + a_4\tilde{S}_4]+ $

$(2^{-1})[c+ [(4b^2+1)/4b]y+ [(4b^2-1)/4b]x+a_2\tilde{S}_2 + a_4\tilde{S}_4] =$

$c+ a_2\tilde{S}_2 + a_4\tilde{S}_4 + (1/2)[(4b^2+1)/4b+(4b^2-1)/4b]x +(1/2)[(4b^2-1)/4b+(4b^2+1)/4b]y=$

$c+ a_2\tilde{S}_2 + a_4\tilde{S}_4 + (1/2)(8b^2/4b)x +(1/2)(8b^2/4b)y=$

$c+ a_2\tilde{S}_2 + a_4\tilde{S}_4 + (4b^2/4b)x +(4b^2/4b)y=$

$c+ a_2\tilde{S}_2 + a_4\tilde{S}_4 + bx + by=$

$c+ a_2\tilde{S}_2 + a_4\tilde{S}_4 + b(x+y)$, 

so $f(A_1) \ni a_2\tilde{S}_2 + a_4\tilde{S}_4 + b(x+y)$.

But we have seen that $\tilde{S}_2= x^2+y^2-2xy \in f(A_1)$, 

therefore $f(A_1) \ni  a_4\tilde{S}_4 + b(x+y)$.

Now, a direct computation (in $A_1$) shows that 
$f(A_1) \ni (x^2+y^2-2xy)(x^2+y^2-2xy)=$
$(x^4+y^4)-4(x^3y+xy^3)+6x^2y^2-4(x^2+y^2-2xy)+2$.

Hence, $f(A_1) \ni (x^4+y^4)-4(x^3y+xy^3)+6x^2y^2= \tilde{S}_4$.

We have just seen that $f(A_1) \ni  a_4\tilde{S}_4 + b(x+y)$, so $f(A_1) \ni b(x+y)$.

Then $f(A_1) \ni (x+y)$ and we have seen that $f(A_1) \ni (x-y)$,
so $f(A_1) \ni x,y$ and $f$ is onto.

For a general $f$ from the family, similar computations show that $f(A_1) \ni x,y$.
\end{proof}

Some remarks:

\textbf{1.} Proposition \ref{family of alpha auto} shows that given any even number $n \geq 2$ (and also for $n=1$), there exists an $\alpha$-automorphism of degree $n$.
Indeed, for $F \ni b \neq 0$, $c,a_2,a_4,a_6, \ldots \in F$:
If $n=2J$ ($J \in \{1,2,3, \ldots\}$):
$f(x)= c+ [(4b^2+1)/4b]x+ [(4b^2-1)/4b]y+ 
\sum_{j= 1,2,3,\ldots,J} a_{2j}\tilde{S}_{2j}$.

$f(y)= c+ [(4b^2+1)/4b]y+ [(4b^2-1)/4b]x+ 
\sum_{j= 1,2,3,\ldots,J} a_{2j}\tilde{S}_{2j}$.

If $n=1$: 
$f(x)= c+ [(4b^2+1)/4b]x+ [(4b^2-1)/4b]y$.

$f(y)= c+ [(4b^2+1)/4b]y+ [(4b^2-1)/4b]x$.

This is the automorphism brought in Remark \ref{degree 1 is auto}: 

$f(x)= Ax+By+C, f(y)= Bx+Ay+C$, with $A^2- B^2= 1$.

Indeed, take $A= (4b^2+1)/4b$, $B= (4b^2-1)/4b$, $C=c$.

$A^2- B^2= [(4b^2+1)/4b]^2- [(4b^2-1)/4b]^2= $

$[(16b^4+8b^2+1)/16b^2]- [(16b^4-8b^2+1)/16b^2]= $

$16b^2/16b^2 =1$.

We wish to remark that we did somewhat tedious computations for a general $\alpha$-endomorphism $f$ of degree at most $m \in \{2,3,4\}$ and arrived at the following conclusions (which may or may not give a hint of what happens in higher degrees):
\bit
\item $m=2$: Every $\alpha$-endomorphism is of the following form:

$f(x)= c+ [(4b^2+1)/4b]x+ [(4b^2-1)/4b]y+ a_2(x^2+ y^2 -2xy)$, 

$f(y)= c+ [(4b^2+1)/4b]y+ [(4b^2-1)/4b]x+ a_2(x^2+ y^2 -2xy)$,

where $F \ni b \neq 0$, $c,a_2 \in F$.

(Of course, when $a_2 \neq 0$ the degree of $f$ is exactly $2$, and when $a_2= 0$ the degree of $f$ is exactly $1$).

\item $m=3$: There exists no $\alpha$-endomorphism of $A_1$ of degree $3$. 

{}From the general $f(x)= E_3+ E_2+ E_1+ E_0$ (where $E_i$ is $(1,1)$ homogeneous of degree $i$), 
we got (from solving $[f(x)^*,f(x)]= 1$) that $E_3=0$ and $f(x)= c+ [(4b^2+1)/4b]x+ [(4b^2-1)/4b]y+ a_2(x^2+ y^2 -2xy)$. Namely, we got a member of our family of degree at most $2$.

(Actually, we did a simpler computation, which relies on Theorem \ref{prime degree}).

\item $m=4$: Every $\alpha$-endomorphism is of the following form:

$f(x)= c+ [(4b^2+1)/4b]x+ [(4b^2-1)/4b]y+ a_2(x^2+ y^2 -2xy)+ a_4(x^4+ y^4 -4(x^3y+xy^3) -6x^2y^2$, 

$f(y)= c+ [(4b^2+1)/4b]y+ [(4b^2-1)/4b]x+ a_2(x^2+ y^2 -2xy)+ a_4(x^4+ y^4 -4(x^3y+xy^3) -6x^2y^2$,

where $F \ni b \neq 0$, $c,a_2,a_4 \in F$.

\eit

We will not bring our computations here, since they are just solving systems of equations (of degree $2$, since we have products of a coefficient from $f(x)$ with a coefficient from $f(y)$).

\textbf{2.} Let $S_{2j}$ be symmetric and $(1,1)$ homogeneous of $(1,1)$ degree $2j$ (apriori $S_{2j}$ may not equal the $a_{2j}\tilde{S}_{2j}$ of Proposition \ref{family of alpha auto}).

A general form of such an element is 
$S_{2j}= d_0(x^{2j}+y^{2j})+d_1(x^{2j-1}y+xy^{2j-1})+d_2(x^{2j-2}y^2+x^2y^{2j-2})+\ldots+
d_{j-1}(x^{2j-(j-1)}y^{j}+x^{j}y^{2j-(j-1)})+ d_j(x^jy^j)$.

Solving the following 
$(\partial / \partial y)(S_{2j}) + (\partial / \partial x)(S_{2j})= 0$, 

yields a unique solution, namely our $a_{2j}\tilde{S}_{2j}$ ($a_{2j} \in F$).

For example, $j=1$:
$S_{2}= d_0(x^2+y^2)+d_1xy$.
$(\partial / \partial y)(S_{2})= 2d_0y+d_1x$,
$(\partial / \partial x)(S_{2})= 2d_0x+d_1y$.
So, $(\partial / \partial y)(S_{2j}) + (\partial / \partial x)(S_{2j})= 
2d_0y+d_1x + {2d_0x+d_1y}=
(2d_0+d_1)y + (d_1+2d_0)x$.
Hence, $2d_0+d_1= 0$, so $d_1= -2d_0$ and we have 
$S_{2}= d_0(x^2+y^2) - 2d_0xy= d_0{(x^2+y^2)-2xy}$, as claimed.

\textbf{3.} The following family of functions is not a family of $\alpha$-endomorphisms (it is, only when all the $a_i$'s are zero),
it is not even a family of endomorphisms, since $[g(y),g(x)] \notin F$:

Let $F \ni b \neq 0$ and $F \ni c, a_3, a_5, a_7, \ldots$.
$g(x)= c+ [(4b^2+1)/4b]x+ [(4b^2-1)/4b]y+ \sum_{j= 1,2,3,\ldots} a_{2j + 1}\tilde{K}_{2j +1} $
$g(y)= c+ [(4b^2+1)/4b]y+ [(4b^2-1)/4b]x - \sum_{j= 1,2,3,\ldots} a_{2j + 1}\tilde{K}_{2j +1} $,

where only finitely many scalars from the set $F \ni c, a_3, a_5, a_7, \ldots$ are nonzero
and $\tilde{K}_{2j + 1}$ is an antisymmetric element of $(1, 1)$ degree $2j +1$ of the special following form:
$A_1 \ni \tilde{K}_{2j + 1}= (X-Y)^{2j +1}$, with $(X-Y)^{2j +1}$ = the computation of $(X-Y)^{2j +1}$ in $F[X,Y]$ and then replacing $X,Y$ by $x,y$.

In Proposition \ref{family of alpha auto} we had 
$(\partial / \partial y)(\tilde{S}_{2j}) + (\partial / \partial x)(\tilde{S}_{2j})= 0$. 

Here we must find antisymmetric $K_{2j +1}$ such that

$(\partial / \partial y)(K_{2j +1}) - (\partial / \partial x)(K_{2j +1})= 0$. 

However, a direct computation shows that there exists no such $K_{2j +1}$.

Indeed, let $K_{2j+ 1}=$
$d_0(x^{2j+1}-y^{2j+1})+d_1(x^{2j}y-xy^{2j})+d_2(x^{2j-1}y^2-x^2y^{2j-1})+\ldots+d_j(x^{2j+1-j}y^{j}-x^{j}y^{2j+1-j})$.

It is not difficult to solve 
$(\partial / \partial y)(K_{2j +1}) - (\partial / \partial x)(K_{2j +1})= 0$, 
and see that the only solution is $d_0= d_1= d_2= \ldots= d_j= 0$, hence $K_{2j +1}= 0$.
 
For example, for $j=1$, we have $K_3= d_0(x^3-y^3)+d_1(x^2y-xy^2)$, so
$(\partial / \partial y)(K_{3})= -3d_0y^2+ d_1x^2- 2d_1xy$ and
$(\partial / \partial x)(K_{3})= 3d_0x^2+ 2d_1xy- d_1y^2$.

Therefore,
$(\partial / \partial y)(K_{3}) - (\partial / \partial x)(K_{3})= $

$-3d_0y^2+ d_1x^2- 2d_1xy - {3d_0x^2+ 2d_1xy- d_1y^2}=$

$(-3d_0+d_1)y^2 + (d_1-3d_0)x^2 - 4d_1xy$.

So, $d_1-3d_0= 0$ and $-4d_1=0$, which implies that $3d_0= d_1= 0$, so $d_0= d_1= 0$.

For general $j$ we get $-2(j+1)d_jx^jy^j =0$, so $d_j=0$ and then all the other $d_i$'s are zero, since
$d_{j-1}$ is a multiple of $d_j$ hence $d_{j-1}=0$. $d_{j-2}$ is a multiple of $d_{j-1}$ etc.

As an exercise, one can check that 
$g(x)= c+ [(4b^2+1)/4b]x+ [(4b^2-1)/4b]y+ a_3\tilde{K}_3 + a_5\tilde{K}_5$, 

$g(y)= c+ [(4b^2+1)/4b]y+ [(4b^2-1)/4b]x- a_3\tilde{K}_3 - a_5\tilde{K}_5$,
 
where $b \neq 0$, $\tilde{K}_3= (x^3-y^3)-3(x^2y-xy^2)$, 

$\tilde{K}_5= (x^5-y^5)- 5(x^4y-xy^4)+ 10(x^3y^2-x^2y^3)$,
is not an ($\alpha$-)endomorphism.

Now we try to show that the starred Dixmier's conjecture is true, at least in some special cases.

\begin{thm}\label{starred true}
Let $f$ be an $\alpha$-endomorphism of $A_1$ of degree $n > 1$.

If $\rho(x^n+y^n)$ or $\rho(x^n-y^n)$ belongs to the $(1,1)$ leading term of $f(x)$ ($F \ni \rho \neq 0$),
then $f$ is onto.
\end{thm}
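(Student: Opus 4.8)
The plan is to reduce surjectivity of $f$ to a single clean statement about the $(1,-1)$ leading term of $f(x)$, and then to feed that into Joseph's machinery. First I would record two structural facts about the image $f(A_1)$. Since $f$ commutes with $\alpha$, the subalgebra $f(A_1)$ is $\alpha$-stable: $\alpha(f(A_1))= f(\alpha(A_1))= f(A_1)$; hence it is enough to prove $x \in f(A_1)$, because then $y= \alpha(x) \in f(A_1)$ and $f$ is onto. Moreover, writing $f(x)= S+K$ with $S= (f(x)+f(y))/2$ symmetric and $K= (f(x)-f(y))/2$ antisymmetric, both $S$ and $K$ lie in $f(A_1)$, and $[S,K]= 1/2$ (this is the computation $1= [f(y),f(x)]= [S-K,S+K]= 2[S,K]$, as in the trivial fact used in Proposition \ref{family of alpha auto}).

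The next step is to reinterpret the hypothesis. The $(1,-1)$ grading is an honest $\Z$-grading of $A_1$ (it is the eigenspace decomposition of $\operatorname{ad}(xy)$, since $[xy,x]= x$ and $[xy,y]= -y$), which is exactly why $\nu_{1,-1}$ is additive on products, as recorded in Proposition \ref{degree}. I claim the hypothesis ``$\rho(x^n+y^n)$ or $\rho(x^n-y^n)$ belongs to $l_{1,1}(f(x))$'' is equivalent to $\nu_{1,-1}(f(x))= n$ together with $l_{1,-1}(f(x))= \rho x^n$. Indeed every monomial $x^iy^j$ of $f(x)$ satisfies $i-j \le i+j \le \nu_{1,1}(f(x))= n$, with $i-j= n$ forcing $j= 0$ and $i= n$; so $\nu_{1,-1}(f(x))= n$ exactly when the coefficient of $x^n$ in $l_{1,1}(f(x))= E_n$ is nonzero, which is precisely the stated condition (with that coefficient equal to $\rho$). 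Thus the $(1,-1)$ leading term of $f(x)$ is the pure power $\rho x^n$, with no occurrence of $y$. Applying $\alpha$ (which sends $x^iy^j$ to $x^jy^i$, reversing the $(1,-1)$ grading) gives $l_{1,-1}(f(y))= \pm \rho x^n$, the sign depending on whether $E_n$ is symmetric or antisymmetric as guaranteed by Lemma \ref{useful lemma}. Consequently the distinguished element $D$ of $f(A_1)$ --- namely $D= S$ when $E_n$ is symmetric and $D= K$ when $E_n$ is antisymmetric --- has $\nu_{1,-1}(D)= n$ and $l_{1,-1}(D)= \rho x^n$.

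With the pure-power leading term $l_{1,-1}(f(x))= \rho x^n$ in hand, the plan is to invoke Joseph's analysis via the extension $A_1^{(m)}$. Passing to $A_1^{(m)}$ with $m$ a multiple of $n$ makes $x^{1/m}$ available, so the pure power $\rho x^n$ admits a genuine $(1,-1)$-homogeneous $n$-th root of $(1,-1)$ degree $1$; using the additivity of $\nu_{1,-1}$ (Proposition \ref{degree}) one then performs the standard degree reduction on the generators of $f(A_1)$ and applies \cite[Corollary 5.5]{joseph} to conclude that $f$ is onto. The point is that a leading term which is a pure power of $x$ (no $y$) is exactly the degenerate configuration that Joseph's corollary resolves, and the $\alpha$-symmetry has been used only to force $\nu_{1,1}(f(x))= \nu_{1,1}(f(y))$ (hence $M= N= 1$ in Lemma \ref{useful lemma}) and to put the hypothesis in the symmetric form $\rho(x^n \pm y^n)$.

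The main obstacle I expect is entirely in the last step: controlling the lower-order terms through the root extraction in $A_1^{(m)}$ and checking that the reduced elements remain inside $f(A_1)$ (rather than merely inside the overalgebra $A_1^{(m)}$), so that Joseph's Corollary 5.5 genuinely returns the generators $x,y$. One must also carry the two cases of Lemma \ref{useful lemma} ($E_n$ symmetric versus antisymmetric) in parallel, since they produce the distinguished element $D$ as $S$ or $K$ respectively and hence slightly different sign bookkeeping in $l_{1,-1}(f(y))= \pm\rho x^n$. By contrast, the reductions in the first two paragraphs are routine, and the reformulation of the hypothesis as the pure-power condition $l_{1,-1}(f(x))= \rho x^n$ is the conceptual key that aligns the problem with Joseph's corollary.
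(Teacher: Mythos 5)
Your opening reductions are fine, and the reformulation in your second paragraph is correct and genuinely nice: since every monomial $x^iy^j$ of $f(x)$ satisfies $i-j\le i+j\le n$, with equality forcing $(i,j)=(n,0)$, the hypothesis (combined with Lemma \ref{useful lemma}) is indeed equivalent to $\nu_{1,-1}(f(x))=n$ and $l_{1,-1}(f(x))=\rho x^n$ --- a cleaner statement of the condition than the one in the theorem. But your final step contains a genuine gap, and it rests on a misreading of Joseph's result. \cite[Corollary 5.5]{joseph} does \emph{not} say that an endomorphism whose $f(x)$ has a pure-power $(1,-1)$-leading term is an automorphism; it says that either $f$ is an automorphism, or there exists a positive integer $m$ and an embedding $\psi^{(m)}\in H^{(m)}$ of $A_1$ into $A_1^{(m)}$ with $l_{1,-1}(\psi^{(m)}(f(x)))=-(m/\beta l)x^{-l/m}$ and $l_{1,-1}(\psi^{(m)}(f(y)))=\beta yx^{1+l/m}$. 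The degenerate configuration concerns the images under an \emph{arbitrary} $\psi^{(m)}$, not $f(x)$ itself, so to conclude surjectivity one must rule out \emph{all} such $\psi^{(m)}$. Knowing $l_{1,-1}(f(x))=\rho x^n$ inside $A_1$ gives no direct control over $l_{1,-1}(\psi^{(m)}(f(x)))$, since $A=\psi^{(m)}(x)$ and $B=\psi^{(m)}(y)$ are unknown elements of $A_1^{(m)}$ and cancellations among the terms built from $A$ and $B$ can occur. Your ``root extraction / standard degree reduction'' sketch never engages with this quantifier over $\psi^{(m)}$, so the proposal invokes the dichotomy in the wrong direction and proves nothing beyond the reformulation.

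For comparison, the paper's proof is precisely an attempt to rule out such $\psi^{(m)}$: writing $a=\nu_{1,-1}(A)$ and $b=\nu_{1,-1}(B)$, it splits into the cases $b\le a\le 0$, $b\le 0\le a$, $0\le b<a$ and $0<b=a$, and uses the hypothesis --- via the fact that $\psi^{(m)}(f(y))$ contains $\pm\gamma(A^n\pm B^n)$ plus terms of smaller $(1,-1)$-degree --- to derive contradictions in the first three cases. In the fourth case, where $\nu_{1,-1}(A_l)=\nu_{1,-1}(B_l)$ and the candidate leading term $-\gamma[(A_l)^n-(B_l)^n]-T_n$ may vanish by cancellation, the paper explicitly admits it cannot finish (``it is not yet fully understood, so our theorem may not be proved''). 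So the ``main obstacle'' you flag as a matter of lower-order bookkeeping is in fact the unresolved core of the theorem, open even in the paper itself; your proposal neither resolves that case nor reduces it to anything simpler. Keep the observation $l_{1,-1}(f(x))=\rho x^n$, but the proof as proposed does not go through.
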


We quote Joseph's result \cite[Corollary 5.5]{joseph}: ``Let $f$ be an endomorphism of $A_1$. Then either $f$ is an
automorphism or there exists a positive integer $m$ and a map $\psi^{(m)} \in H^{(m)}$
such that for some $\Z \in l \neq 0, F \ni \beta \neq 0$, 
$l_{1, -1}(\psi^{(m)}(f(x))) = -(m/ \beta l) x^{- l/m}$ 
and $l_{1, -1}(\psi^{(m)}(f(y))) = \beta y x^{1+ l/m}$". 

Some remarks:\bit
\item We express each element $w \in A_1^{(m)}$ in the normal form $\sum \gamma_{ij} y^ix^{j/m}$, 
$\gamma_{ij} \in F$, $i \in \N$, $j \in \Z$.

\item $\psi^{(m)} \in H^{(m)}$ is an injective homomorphism of $A_1$ into $A_1^{(m)}$.

An accurate definition of $H^{(m)}$ can be found in \cite[page 605]{joseph}.

\item $\nu_{1, -1}(\psi^{(m)}(f(x)))= -(l/m)$ and $\nu_{1, -1}(\psi^{(m)}(f(y)))= l/m$. 

Therefore, we have $\nu_{1, -1}(\psi^{(m)}(f(x)))= -\nu_{1, -1}(\psi^{(m)}(f(y)))$. 
\eit

\begin{proof}
Let $f$ be an $\alpha$-endomorphism of $A_1$. 
We wish to show that $f$ is an automorphism of $A_1$.

By \cite[Corollary 5.5]{joseph}, it is enough to show that there exist no positive integer $m$ and a map $\psi^{(m)} \in H^{(m)}$
such that for some $\Z \ni l \neq 0, F \ni \beta \neq 0$, 
$l_{1, -1}(\psi^{(m)}(f(x))) = -(m/ \beta l) x^{- l/m}$ 
and $l_{1, -1}(\psi^{(m)}(f(y))) = \beta y x^{1+ l/m}$.

Otherwise, let $m$ be a positive integer and $\psi^{(m)} \in H^{(m)}$ a map
such that for some $\Z \ni l \neq 0, F \ni \beta \neq 0$, 
$l_{1, -1}(\psi^{(m)}(f(x))) = -(m/ \beta l) x^{- l/m}$ 
and $l_{1, -1}(\psi^{(m)}(f(y))) = \beta y x^{1+ l/m}$.

Denote: $\psi^{(m)}(x)= A= A_l+ \epsilon$, $\psi^{(m)}(y)= B= B_l+ \delta$,
where $A_l= l_{1,-1}(A)$, $B_l= l_{1,-1}(B)$.

($A, B, A_l, B_l, \epsilon, \delta \in A_1^{(m)}$).

$\nu_{1, -1}(A)= a \in \Q$, $\nu_{1, -1}(B)= b \in \Q$.

(Hence, $\nu_{1, -1}(A_l)= a$, $\nu_{1, -1}(B_l)= b$).

$f(x)= E_n+ E_{n-1}+ \ldots + E_1+ E_0$, where $n > 1$ 
(we have remarked in Remark \ref{degree 1 is auto} that an $\alpha$-endomorphism of degree $1$ is an automorphism), 

$E_n \neq 0$ and $E_i$ ($0 \leq i \leq n$) is $(1, 1)$ homogeneous of $(1, 1)$ degree $i$. 

$E_i= S_i+ K_i$ where $S_i= [E_i+(E_i)^*]/2$ and $K_i= [E_i-(E_i)^*]/2$ ($0 \leq i \leq n$).

(Notice that $K_0= 0$).

$S= S_n+S_{n-1}+\ldots+S_1+S_0$ and $K= K_n+K_{n-1}+\ldots+K_1+K_0$.

Hence, $f(x)= E_n+ E_{n-1}+ \ldots + E_1+ E_0= S_n+K_n+S_{n-1}+K_{n-1}+\ldots+S_1+K_1+S_0+K_0= S+K$,
and $f(y)= f(x)^*= S-K$.

Since $\nu_{1, -1}(\psi^{(m)}(f(x)))= -\nu_{1, -1}(\psi^{(m)}(f(y)))$ (and each is nonzero), 
we may assume w.l.o.g that 
$\Q \ni \nu_{1, -1}(\psi^{(m)}(f(x)))$ is positive, denote it by $q > 0$.
Then, $\Q \ni \nu_{1, -1}(\psi^{(m)}(f(y)))= -q < 0$.

In those notations we have:

$0 < q= \nu_{1, -1}(\psi^{(m)}(f(x)))= \nu_{1, -1}(\psi^{(m)}(S+K))=$
$\nu_{1, -1}(\psi^{(m)}(S) + \psi^{(m)}(K))$ and

$0 > -q= \nu_{1, -1}(\psi^{(m)}(f(y)))= \nu_{1, -1}(\psi^{(m)}(S-K))= $
$\nu_{1, -1}(\psi^{(m)}(S) - \psi^{(m)}(K))$.

There are (w.l.o.g) three cases:\bit
\item $b \leq a \leq 0$.
\item $b \leq 0 \leq a$.
\item $0 \leq b \leq a$. Here there are two options: $0 \leq b < a$ and $0 \leq b= a$.
\eit
Now we show that the first two cases and the first option of the third case are impossible.

\textbf{First case} $b \leq a \leq 0$: 
Clearly (even without knowing that $\rho(x^n+y^n)$ or $\rho(x^n-y^n)$ belongs to the $(1,1)$ leading term of $f(x)$)
it is impossible to have $\nu_{1, -1}(\psi^{(m)}(f(x)))= q > 0$.

Indeed, $(\psi^{(m)}(f(x)))$ is a polynomial in $A$ and $B$, 
with $\nu_{1, -1}(A)= a \leq 0$ and $\nu_{1, -1}(B)= b \leq 0$.
Therefore, Proposition \ref{degree} implies that $(\psi^{(m)}(f(x))) \leq 0$.

\textbf{Second case} $b \leq 0 \leq a$: 
{}From Lemma \ref{useful lemma}, we get that the $(1,1)$ leading term of $f(x)$, $E_n$, is symmetric or antisymmetric.

\bit
\item If $l_{1, 1}(f(x))= E_n$ is symmetric ($E_n=S_n$), then $\rho(x^n+y^n)$ belongs to $E_n$.
\item If $l_{1, 1}(f(x))= E_n$ is antisymmetric ($E_n=K_n$), then $\rho(x^n-y^n)$ belongs to $E_n$.
\eit
We only show what happens if $l_{1, 1}(f(x))= E_n$ is symmetric and $\rho(x^n+y^n)$ belongs to $E_n$.
(If $l_{1, 1}(f(x))= E_n$ is antisymmetric and $\rho(x^n-y^n)$ belongs to $E_n$, we get similar results).

Write $E_n= S_n= \gamma(x^n+y^n)+ D_n$, where $F \in \gamma \neq 0$ and $D_n$ is symmetric and $(1,1)$ homogeneous of $(1,1)$ degree $n$ in which (a nonzero scalar multiple of) $x^n+y^n$ not appears.

Then,
$\psi^{(m)}(f(y))= $
$\psi^{(m)}((E_n)^*+ (E_{n-1})^*+ \ldots + (E_1)^*+ (E_0)^*)=$

$\psi^{(m)}(S_n+ (E_{n-1})^*+ \ldots + (E_1)^*+ (E_0)^*)=$

$\psi^{(m)}(\gamma(x^n+y^n)+ D_n+ (E_{n-1})^*+ \ldots + (E_1)^*+ (E_0)^*)=$

$\gamma(A^n+B^n)+ \psi^{(m)}(D_n)+ \psi^{(m)}((E_{n-1})^*)+ \ldots + \psi^{(m)}((E_1)^*)+ \psi^{(m)}((E_0)^*)$.

And $\psi^{(m)}(f(x))=$

$\gamma(A^n+B^n)+ \psi^{(m)}(D_n)+ \psi^{(m)}(E_{n-1})+ \ldots + \psi^{(m)}(E_1)+ \psi^{(m)}(E_0)$.
Notice the following trivial claim:
Let $0 \leq a \in \Q$, $0 \geq b \in \Q$, $0 \leq \alpha \in \N$, $0 \leq \beta \in \N$, 
$0 \leq n \in \N$ such that $\alpha+ \beta \leq n$.  
Then, $na \geq \alpha a+ \beta b$.

Proof of claim: $na \geq \alpha a+ \beta b$ $\Leftrightarrow$ $(n-\alpha)a \geq \beta b$.
Since $0 \leq \beta \leq (n-\alpha)$ 
and $b \leq 0 \leq a$, we get $\beta b \leq (n-\alpha)a$, since $(n-\alpha)a$ is non-negative, while $\beta b$ is non-positive. 

Let $0 \leq \alpha \in \N$ and $0 \leq \beta \in \N$ such that $\alpha+ \beta \leq n$ 

($n$ is, of course, the degree of $f$). 

Then from the above trivial claim, $na \geq \alpha a+ \beta b$, where $a$ and $b$ are, of course, $\nu_{1, -1}(A_l)= a$, $\nu_{1, -1}(B_l)= b$.
We now show that: \bit 
\item $\nu_{1, -1}(\psi^{(m)}(f(y)))= na \geq 0$ - a contradiction to $\nu_{1, -1}(\psi^{(m)}(f(y))) < 0$. 
Or
\item $\nu_{1, -1}(\psi^{(m)}(f(x))) \leq 0$ - a contradiction to $\nu_{1, -1}(\psi^{(m)}(f(x))) > 0$.
\eit

If $na > \alpha a+ \beta b$, then $l_{1,-1}(\psi^{(m)}(f(y)))= \gamma (A_l)^n$.

Therefore, $\nu_{1, -1}(\psi^{(m)}(f(y)))= na \geq 0$, 

a contradiction to $\nu_{1, -1}(\psi^{(m)}(f(y))) < 0$.

If $na = \alpha a+ \beta b$, then: \bit
\item If $a=b=0$, then $\nu_{1, -1}(\psi^{(m)}(f(x))) \leq 0$, a contradiction to 
$\nu_{1, -1}(\psi^{(m)}(f(x))) > 0$.

\item If $b < 0=a$ ($\beta=0$), then $\nu_{1, -1}(\psi^{(m)}(f(x))) \leq 0$, 
a contradiction to $\nu_{1, -1}(\psi^{(m)}(f(x))) > 0$.

\item If $b=0< a$ ($\alpha= n$), then $l_{1,-1}(\psi^{(m)}(f(y)))= \gamma (A_l)^n$. 

Therefore, $\nu_{1, -1}(\psi^{(m)}(f(y)))= na > 0$, a contradiction to $\nu_{1, -1}(\psi^{(m)}(f(y))) < 0$.

\item If $b < 0 < a$ ($\alpha =n$ and $\beta=0$), then $l_{1,-1}(\psi^{(m)}(f(y)))= \gamma (A_l)^n$. 

Therefore, $\nu_{1, -1}(\psi^{(m)}(f(y)))= na > 0$, a contradiction to $\nu_{1, -1}(\psi^{(m)}(f(y))) < 0$.
\eit

\textbf{Third case, first option} $0 \leq b < a$: It is similar to the second case above. 
{}From Lemma \ref{useful lemma}, we get that the $(1,1)$ leading term of $f(x)$, $E_n$, is symmetric or antisymmetric.

\bit
\item If $l_{1, 1}(f(x))= E_n$ is symmetric ($E_n=S_n$), then $\rho(x^n+y^n)$ belongs to $E_n$.
\item If $l_{1, 1}(f(x))= E_n$ is antisymmetric ($E_n=K_n$), then $\rho(x^n-y^n)$ belongs to $E_n$.
\eit
We only show what happens if $l_{1, 1}(f(x))= E_n$ is antisymmetric and $\rho(x^n-y^n)$ belongs to $E_n$.
(If $l_{1, 1}(f(x))= E_n$ is symmetric and $\rho(x^n+y^n)$ belongs to $E_n$, we get similar results).

Write $E_n= K_n= \gamma(x^n-y^n)+ \tilde{D_n}$, where $F \in \gamma \neq 0$ and $\tilde{D_n}$ is antisymmetric, $(1,1)$ homogeneous of $(1,1)$ degree $n$, in which (a nonzero scalar multiple of) $x^n-y^n$ not appears.

Then, 
$\psi^{(m)}(f(y))=$

$\psi^{(m)}(-K_n+ (E_{n-1})^*+ \ldots + (E_1)^*+ (E_0)^*)=$

$\psi^{(m)}(-\gamma(x^n-y^n)- \tilde{D_n}+ (E_{n-1})^*+ \ldots + (E_1)^*+ (E_0)^*)=$

$-\gamma(A^n-B^n)- \psi^{(m)}(\tilde{D_n})+ \psi^{(m)}((E_{n-1})^*)+ \ldots + \psi^{(m)}((E_1)^*)+ \psi^{(m)}((E_0)^*)$.
Again we have a trivial claim:
Let $a \in \Q$, $b \in \Q$, $0 \leq b < a$, $0 \leq \alpha \in \N$, $0 \leq \beta \in \N$, 
$0 \leq n \in \N$ such that $\alpha+ \beta \leq n$.  
Then, $na > \alpha a+ \beta b$.

Proof of claim: $na > \alpha a+ \beta b$ $\Leftrightarrow$ $(n-\alpha)a > \beta b$.
Since $0 \leq \beta \leq (n-\alpha)$ and $0 \leq b < a$, we get $\beta b < (n-\alpha)a$.
Let $0 \leq \alpha \in \N$ and $0 \leq \beta \in \N$ such that $\alpha+ \beta \leq n$ ($n$ is, of course, the degree of $f$). Then from the above trivial claim, $na > \alpha a+ \beta b$, where $a$ and $b$ are, of course, $\nu_{1, -1}(A_l)= a$, $\nu_{1, -1}(B_l)= b$.
Therefore, $l_{1,-1}(\psi^{(m)}(f(y)))= -\gamma (A_l)^n$ and $\nu_{1, -1}(\psi^{(m)}(f(y)))= na > 0$, 

a contradiction to $\nu_{1, -1}(\psi^{(m)}(f(y))) < 0$.

It remains to show that the second option of the third case is impossible.
It is a lot complicated, and we hope that it is indeed impossible as all the other cases are impossible. 
If it is possible ($\psi^{(m)}$ exists), then it may help us in finding (although not so quickly) a counterexample, namely, an $\alpha$-endomorphism which is not onto.

\textbf{Third case, second option} $0 \leq b= a$ (Unfortunately, it is not yet fully understood, so our theorem may not be proved): Of course, when $b=0$ we get $a=b=0$ which we dealt with in the first case. Hence we assume that $0 < b=a$.

We try to show that there exists no such $\psi^{(m)}$.

As we have already seen above, one can write $E_n= S_n= \gamma(x^n+y^n)+D_n$ or $E_n= K_n= \gamma(x^n-y^n)+\tilde{D_n}$, where $F \in \gamma \neq 0$, $D_n$ is symmetric $(1,1)$ homogeneous of $(1,1)$ degree $n$ in which (a nonzero scalar multiple of) $x^n+y^n$ not appears and $\tilde{D_n}$ is antisymmetric $(1,1)$ homogeneous of $(1,1)$ degree $n$ in which (a nonzero scalar multiple of) $x^n-y^n$ not appears.

We divide to two options: $A_l \neq B_l$ and $A_l = B_l$.

$A_l \neq B_l$ ($\epsilon$ may or may not equal $\delta$): 
Assume that $E_n= K_n= \gamma(x^n-y^n)+\tilde{D_n}$ ($E_n= S_n= \gamma(x^n+y^n)+D_n$ yields similar results).

Then, $\psi^{(m)}(f(y))=$

$\psi^{(m)}(-K_n+ (E_{n-1})^*+ \ldots + (E_1)^*+ (E_0)^*)=$

$\psi^{(m)}(-\gamma(x^n-y^n)- \tilde{D_n}+ (E_{n-1})^*+ \ldots + (E_1)^*+ (E_0)^*)=$

$-\gamma(A^n-B^n)- \psi^{(m)}(\tilde{D_n})+ \psi^{(m)}((E_{n-1})^*)+ \ldots + $

$\psi^{(m)}((E_1)^*)+ \psi^{(m)}((E_0)^*)=$

$-\gamma[(A_l+\epsilon)^n-(B_l+\delta)^n]- \psi^{(m)}(\tilde{D_n})+ \psi^{(m)}((E_{n-1})^*)+ \ldots +$

$\psi^{(m)}((E_1)^*)+ \psi^{(m)}((E_0)^*)$.

Write $\tilde{D_n}= \sum \gamma_{ij}x^iy^j$ and let $T_n= \sum \gamma_{ij}(A_l)^i(B_l)^j$.

Then the $(1,1)$ leading term of $\psi^{(m)}(f(y))$ is 

$-\gamma[(A_l)^n-(B_l)^n]- T_n$, 

since it has $(1,1)$ degree $na$ and all the other terms appearing in $\psi^{(m)}(f(y))$ are of degrees $ < na$.

Therefore, $\nu_{1, -1}(\psi^{(m)}(f(y)))= na > 0$, a contradiction to $\nu_{1, -1}(\psi^{(m)}(f(y))) < 0$.

Unless,
$-\gamma[(A_l)^n-(B_l)^n]- T_n = 0$.

We suspect that the relation $-\gamma[(A_l)^n-(B_l)^n]- T_n = 0$ can not hold in $A_1^{(m)}$.
(Maybe this follows directly from the definition of $A_1^{(m)}$).


$A_l = B_l$: 
Necessarily $\epsilon \neq \delta$. Indeed, if $\epsilon= \delta$, 
then $A= A_l+ \epsilon$ and $B= B_l+ \delta= A_l+ \epsilon$, 
hence $1= [\psi^{(m)}(y), \psi^{(m)}(x)]= [B,A]= [A_l+ \epsilon, A_l+ \epsilon]= 0$.

We are not able to show that $\nu_{1, -1}(\psi^{(m)}(f(y))) > 0$ or that 
$\nu_{1, -1}(\psi^{(m)}(f(x))) < 0$.

Maybe in order to show that there exists no such $\psi^{(m)}$, one may use the following claim:
\textbf{Claim}: Each $(1,-1)$ non-negative component in $\psi^{(m)}(S)$
must also appear (as it is, not multiplied by a scalar $\neq 1$) in $\psi^{(m)}(K)$.  
(In other words, the $(1,-1)$ non-negative components of $\psi^{(m)}(S)$ and $\psi^{(m)}(K)$ are the same).

\textbf{Proof of claim}: Otherwise, there exists a $(1,-1)$ non-negative component in $\psi^{(m)}(S)$
which not appears as it is in $\psi^{(m)}(K)$ 

(this $(1, -1)$ non-negative component appears in $\psi^{(m)}(K)$ multiplied by scalar $\neq 1$).
But then, 
$0 > -q= \nu_{1, -1}(\psi^{(m)}(f(y)))= \nu_{1, -1}(\psi^{(m)}(S) - \psi^{(m)}(K)) \geq 0$, 

since this $(1,-1)$ non-negative component (multiplied by some scalar $\neq 0$) appears in $\psi^{(m)}(S) - \psi^{(m)}(K)$.

(Remember that $f(y)= S-K$).

We do not know how to show that there exists a $(1,-1)$ non-negative component in $\psi^{(m)}(S)$ 
which not appears in $\psi^{(m)}(K)$.

However, it may happen that all the $(1,-1)$ non-negative components of $\psi^{(m)}(S)$ and of $\psi^{(m)}(K)$ are the same, $\nu_{1, -1}(\psi^{(m)}(f(x))) > 0$ and $\nu_{1, -1}(\psi^{(m)}(f(y))) < 0$, but still there is no such $\psi^{(m)} \in H^{(m)}$. 

Maybe even in this unfortunate case, it is still impossible to have 
$l_{1, -1}(\psi^{(m)}(f(x))) = -(m/ \beta l) x^{- l/m}$ 

and $l_{1, -1}(\psi^{(m)}(f(y))) = \beta y x^{1+ l/m}$ 

($m$ positive integer, $\Z \ni l \neq 0$, $F \ni \beta \neq 0$),

where $f$ is our given $\alpha$-endomorphism.

Unfortunately, we are not able to show that it is indeed impossible. 

\end{proof}

The condition that $\rho(x^n+y^n)$ or $\rho(x^n-y^n)$ belongs to the leading $(1, 1)$ term of $f(x)$
($f$ an $\alpha$-endomorphism of $A_1$ of degree $n > 1$, $F \ni \rho \neq 0$) seems reasonable in view of:\bit
\item Proposition \ref{family of alpha auto}: Obviously, every member of the family of degree $\geq 2$ satisfies this condition.
\item Theorem \ref{prime degree}, which will be brought soon, which shows that if $f$ is an $\alpha$-endomorphism of prime degree $p > 2$, then $\rho(x^p-y^p)$ belongs to the $(1,1)$ leading term of $f(x)$
(since the $(1,1)$ leading term of $f(x)$ is actually $\lambda(X-Y)^p$, $F \ni \lambda \neq 0$).
\eit

The following trivial lemma is needed for the proof of Theorem \ref{prime degree}.

\begin{lem}\label{trivial lemma}
Let $w \in A_1$ be $(1,1)$ homogeneous of prime degree $p > 2$.
\bit
\item If $w$ is symmetric and there exists $aX+ bY \in F[X,Y]$ such that $w= (aX+ bY)^p$, then $a= b$.

\item If $w$ is antisymmetric and there exists $aX+ bY \in F[X,Y]$ such that $w= (aX+ bY)^p$, then $a= -b$.

\eit
\end{lem}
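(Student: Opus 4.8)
The plan is to transport everything to the commutative polynomial ring $F[X,Y]$ via the linear isomorphism $\Theta\colon A_1\to F[X,Y]$, $\Theta(x^iy^j)=X^iY^j$, under which the statement becomes an elementary fact about $p$-th powers of linear forms. First I would record how $\alpha$ looks after applying $\Theta$: since $\alpha$ is an antiautomorphism with $\alpha(x)=y$ and $\alpha(y)=x$, one computes $\alpha(x^iy^j)=x^jy^i$, so $\Theta(\alpha(w))$ is obtained from $\Theta(w)$ by interchanging $X$ and $Y$. In particular $w$ is symmetric iff $\Theta(w)$ is invariant under $X\leftrightarrow Y$, and antisymmetric iff $\Theta(w)$ changes sign under $X\leftrightarrow Y$. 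By the stated convention $\Theta(w)=(aX+bY)^p$, and hence $\Theta(\alpha(w))=(bX+aY)^p$. Since $\Theta$ is a bijection, each case reduces to an identity of honest commutative polynomials.

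For the symmetric case this gives the identity $(aX+bY)^p=(bX+aY)^p$ in $F[X,Y]$. If $a=b=0$ there is nothing to prove, so assume $(a,b)\neq(0,0)$, which makes both linear forms nonzero. I would then invoke unique factorization in $F[X,Y]$: a nonzero linear form is irreducible, so two equal $p$-th powers of linear forms must have the same irreducible factor, i.e. $bX+aY=\zeta(aX+bY)$ for some unit $\zeta\in F^\times$; comparing $p$-th powers then gives $\zeta^p=1$. Comparing the coefficients of $X$ and $Y$ yields $b=\zeta a$ and $a=\zeta b$, whence $a=\zeta^2 a$. The crucial point is that $\zeta^2=1$ together with $\zeta^p=1$ forces $\zeta=1$, because $\gcd(2,p)=1$ for the odd prime $p>2$; therefore $a=b$.

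For the antisymmetric case the identity reads $(bX+aY)^p=-(aX+bY)^p$, and here I would again use that $p$ is odd to rewrite the right-hand side as $(-aX-bY)^p$. The degenerate case $a=b=0$ is trivial, and for $(a,b)\neq(0,0)$ both forms are nonzero, so the associates-and-root-of-unity argument applies verbatim: $bX+aY=\zeta(-aX-bY)$ with $\zeta^p=1$, giving $b=-\zeta a$ and $a=-\zeta b$, hence $a=\zeta^2 a$, so $\zeta=1$ and $a=-b$.

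The computations here are genuinely routine (this is the ``trivial lemma''), and I do not expect any real obstacle. The only non-mechanical observation, and the one point I would state explicitly, is that the parity of $p$ is exactly what separates the symmetric conclusion $a=b$ from the antisymmetric conclusion $a=-b$: in both cases one lands on $\zeta^2=1$, and it is the oddness of $p$ that selects $\zeta=1$ while excluding $\zeta=-1$. For $p=2$ both roots of unity would be admissible, which is precisely why the hypothesis $p>2$ is imposed.
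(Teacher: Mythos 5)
Your proof is correct, and it takes a genuinely different route from the paper's. The paper works entirely with coefficients: it expands $(aX+bY)^p$ binomially, applies $\alpha$ (which swaps the exponents of $x$ and $y$ in each monomial), and equates coefficients of $x^{p-k}y^k$ in $w$ and $w^*$, producing the chain $a^p=\pm b^p$, $a^{p-2}=\pm b^{p-2}$, $a^{p-4}=\pm b^{p-4},\ldots$, and finally reads off $a=\pm b$ from the middle pair, where the exponent gap is exactly $1$ --- this is where the oddness of $p$ enters, though somewhat implicitly; the paper must also argue separately that $a\neq 0$ and $b\neq 0$ before dividing. You instead transport the identity to $F[X,Y]$ via $\Theta$ (your computation $\alpha(x^iy^j)=x^jy^i$, hence $\Theta\circ\alpha=$ the $X\leftrightarrow Y$ swap on $\Theta$-images, is right) and invoke unique factorization: from $(aX+bY)^p=(bX+aY)^p$ (resp.\ $=(-aX-bY)^p$, using $p$ odd to absorb the sign) the two linear forms are associates, $bX+aY=\zeta(aX+bY)$ with $\zeta^p=1$, and coefficient comparison gives $\zeta^2=1$, so oddness of $p$ forces $\zeta=1$. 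What each approach buys: yours isolates transparently why $p>2$ is needed (it is oddness, not primality, that matters --- indeed neither argument uses primality, but in yours this is visible at a glance), and it handles nondegeneracy automatically, since $a=0$ forces $b=\zeta a=0$; the paper's computation is more self-contained, using nothing beyond nonvanishing of binomial coefficients in characteristic zero, with no appeal to the UFD structure of $F[X,Y]$. One cosmetic remark: in your degenerate case $a=b=0$ there is in fact nothing to dismiss, since the hypothesis $\nu_{1,1}(w)=p$ already forces $w\neq 0$, exactly as the paper notes when it rules out $a=0$ or $b=0$.
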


Again, when we write $w= (aX+ bY)^p$, we mean the computation of $(aX+ bY)^p$ in $F[X,Y]$ and then replacing $X,Y$ by $x,y$.

\begin{proof}
\bit
\item $w= a^px^p+ pa^{p-1}x^{p-1}by+ p(p-1)/2 a^{p-2}x^{p-2}b^2y^2+ \ldots +$

$p(p-1)/2 a^2x^2b^{p-2}y^{p-2}+ paxb^{p-1}y^{p-1}+ b^py^p=$

$a^px^p+ pa^{p-1}bx^{p-1}y+ p(p-1)/2 a^{p-2}b^2x^{p-2}y^2+ \ldots +$

$p(p-1)/2 a^2b^{p-2}x^2y^{p-2}+ pab^{p-1}xy^{p-1}+ b^py^p$.

Then, $w^*= a^px^p+ pa^{p-1}bxy^{p-1}+ p(p-1)/2 a^{p-2}b^2x^2y^{p-2}+ \ldots +$

$p(p-1)/2 a^2b^{p-2}x^{p-2}y^2+ pab^{p-1}x^{p-1}y+ b^px^p$.

$w$ is symmetric, so $w= w^*$: 
$a^px^p+ pa^{p-1}bx^{p-1}y+ p(p-1)/2 a^{p-2}b^2x^{p-2}y^2+ \ldots +$

$p(p-1)/2 a^2b^{p-2}x^2y^{p-2}+ pab^{p-1}xy^{p-1}+ b^py^p=$ 

$a^px^p+ pa^{p-1}bxy^{p-1}+ p(p-1)/2 a^{p-2}b^2x^2y^{p-2}+ \ldots +$

$p(p-1)/2 a^2b^{p-2}x^{p-2}y^2+ pab^{p-1}x^{p-1}y+ b^px^p$, which implies that:

$a^p= b^p$, $pa^{p-1}b= pab^{p-1}$, $p(p-1)/2 a^{p-2}b^2= p(p-1)/2 a^2b^{p-2}$, etc.

Necessarily $a \neq 0$ and $b \neq 0$. Otherwise, if $a=0$, then from $a^p= b^p$ we get $b=0$. 
But then $w= (aX+ bY)^p= 0$, a contradiction to our assumption that $w$ is of prime degree $p >2$ 
(hence, in particular, $w \neq 0$).

So, $a \neq 0$ and $b \neq 0$. Therefore: $pa^{p-1}b= pab^{p-1}$ implies $a^{p-2}= b^{p-2}$.
$p(p-1)/2 a^{p-2}b^2= p(p-1)/2 a^2b^{p-2}$ implies $a^{p-4}= b^{p-4}$.

Continuing in this way, until $(p (p-1)/2) a^{p-(p-1)/2}b^{p-(p-1)/2-1}=$

$(p (p-1)/2) a^{p-(p-1)/2-1}b^{p-(p-1)/2}$, so 

$(p (p-1)/2) a^{(p+1)/2}b^{(p-1)/2}= (p (p-1)/2) a^{(p-1)/2}b^{(p+1)/2}$.

This last equality implies that $a= b$ 

(we divided by $(p (p-1)/2) a^{(p-1)/2} b^{(p-1)/2}$).

\item The only difference between the antisymmetric case and the symmetric case is in sign. 
Now we have $a^p= -b^p$, $pa^{p-1}b= -pab^{p-1}$, $p(p-1)/2 a^{p-2}b^2= -p(p-1)/2 a^2b^{p-2}$, etc.
Again, $a \neq 0$ and $b \neq 0$. 
If we divide the last equality $(p (p-1)/2) a^{(p+1)/2}b^{(p-1)/2}= -(p (p-1)/2) a^{(p-1)/2}b^{(p+1)/2}$
by 
$(p (p-1)/2) a^{(p-1)/2}b^{(p-1)/2}$, we get $a= -b$.

\eit
\end{proof}

\begin{thm}\label{prime degree}
Let $f$ be an $\alpha$-endomorphism of $A_1$ of prime degree $p > 2$. Let $f(x)= P$ (hence $f(y)= P^*$). 
Then the $(1, 1)$ leading term of $P$, $l_{1, 1}(P)$ is antisymmetric.

Moreover, $l_{1, 1}(P)= \lambda(X-Y)^p$ where $F \ni \lambda \neq 0$ and by $(X-Y)^p$ we mean the computation of $(X-Y)^p$ in $F[X,Y]$ and then replacing $X,Y$ by $x,y$.
\end{thm}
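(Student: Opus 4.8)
The plan is to run a second application of the Poisson-dependence result \cite[Theorem 1.22(2)]{shape of possible counterexamples}, but applied to the \emph{symmetric and antisymmetric parts} of $f(x)$ rather than to $P$ and $P^*$ themselves, and then to read off the shape of the common base from the primality of $p$. This keeps the whole argument inside $A_1$ and uses only the two lemmas just proved.

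First I would write $f(x)=P=S+K$ with $S=(P+P^*)/2$ symmetric and $K=(P-P^*)/2$ antisymmetric, so that $f(y)=P^*=S-K$. Expanding $1=[f(y),f(x)]=[S-K,S+K]=2[S,K]$ gives $[K,S]=-1/2$; in particular $K\neq 0$ and $S$ is non-constant, since a vanishing or constant factor would kill the bracket (also, an antisymmetric nonzero element has no constant term, so $\nu_{1,1}(K)\geq 1$). By Lemma \ref{useful lemma} the leading term $E_p=l_{1,1}(P)$ is symmetric or antisymmetric; passing to degree-$p$ parts in $P=S+K$, this says precisely that exactly one of $l_{1,1}(S),l_{1,1}(K)$ has $(1,1)$-degree $p$ while the other has strictly smaller degree. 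Now I exploit the bracket: since $\nu_{1,1}([K,S])=\nu_{1,1}(-1/2)=0<\nu_{1,1}(K)+\nu_{1,1}(S)-2$ (here $p>2$ is essential, to make the right-hand side positive), we have $[K,S]_{1,1}=0$, and \cite[Theorem 1.22(2)]{shape of possible counterexamples} applies to $K$ and $S$: there are a $(1,1)$-homogeneous $R$, scalars $c_1,c_2\neq 0$, and $M,N\in\N$ with $\gcd(M,N)=1$ and $M/N=\nu_{1,1}(K)/\nu_{1,1}(S)$ such that $l_{1,1}(K)=c_1R^M$ and $l_{1,1}(S)=c_2R^N$.

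This is where primality does the work, and it is the step I expect to be the crux. In the antisymmetric case $\nu_{1,1}(K)=p$ and $\nu_{1,1}(S)=s$ with $1\leq s\leq p-1$, so $M/N=p/s$ is already in lowest terms because $p$ is prime and $p\nmid s$; hence $M=p$, and comparing degrees $p=\deg(l_{1,1}(K))=M\deg R=p\deg R$ forces $\deg R=1$. Thus $E_p=l_{1,1}(K)=c_1R^p$ is a $p$-th power of a linear form, and Lemma \ref{trivial lemma} gives $R\propto X-Y$, i.e. $l_{1,1}(P)=\lambda(X-Y)^p$ with $\lambda\neq 0$. The only remaining task is to eliminate the symmetric case. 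There $\nu_{1,1}(S)=p$ and $\nu_{1,1}(K)=k$ with $1\leq k\leq p-1$, the same coprimality argument gives $N=p$, $\deg R=1$, and $l_{1,1}(S)=c_2R^p$; since $l_{1,1}(S)=E_p$ is symmetric, Lemma \ref{trivial lemma} forces $R\propto X+Y$. But then $l_{1,1}(K)=c_1R^k$ is a scalar multiple of $(X+Y)^k$, hence symmetric, whereas $l_{1,1}(K)$ must be antisymmetric (it is the leading term of the antisymmetric element $K$); a nonzero element cannot be both, so $l_{1,1}(K)=0$, contradicting $\nu_{1,1}(K)=k\geq 1$. This rules out the symmetric case and finishes the proof. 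The delicate points to get exactly right are the bookkeeping that precisely one of $S,K$ attains $(1,1)$-degree $p$ (furnished by Lemma \ref{useful lemma}) and the arithmetic that the reduced fraction $M/N$ pins $M$ or $N$ to $p$, both of which rely on $p$ being an odd prime.
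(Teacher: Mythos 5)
Your proof is correct, but it takes a genuinely different --- and in fact more economical --- route than the paper's. The paper never brackets the symmetric part $S$ against the antisymmetric part $K$ directly; instead, for each half of the statement it performs a separate manipulation on $1=[P^*,P]$: writing $E=E_{p-1}+\dots+E_0$, it derives $1=[P,E-E^*]$ (to rule out a symmetric leading term) and $-1=[f(x),E^*+E]$ (to pin down the antisymmetric case), and then applies \cite[Theorem 1.22(2)]{shape of possible counterexamples} twice, once to each of the pairs $(P,\,E-E^*)$ and $(f(x),\,E^*+E)$, with $M/N=p/m$ where $m$ is the top index at which the lower-order part fails to be (anti)symmetric. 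Your single identity $1=[S-K,S+K]=2[S,K]$ replaces both manipulations, and one application of the same theorem to the pair $(K,S)$ yields both conclusions at once: primality pins the exponent $p$ on whichever of $K,S$ has degree $p$, forces $\deg R=1$, and Lemma \ref{trivial lemma} then either gives $R\propto X-Y$ (antisymmetric case, which is the ``moreover'' claim) or gives $R\propto X+Y$ together with the contradiction that $l_{1,1}(K)=c_1R^k$ would be symmetric while the leading homogeneous component of the antisymmetric element $K$ must be antisymmetric (ruling out the symmetric case). Note that your pair carries exactly the same leading-term information as the paper's two pairs, since $E-E^*=2K$ when $E_p$ is symmetric and $E^*+E=2S$ when $E_p$ is antisymmetric; what you gain is the unification into one application and an immediate verification of the degree hypothesis, as $\nu_{1,1}(K)+\nu_{1,1}(S)\ge p+1>2$ once you observe (as you do) that $[K,S]=-1/2$ forces $K\neq 0$ and $S$ non-constant, with $p>2$ essential. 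The delicate points you flag are all handled correctly: the dichotomy that exactly one of $\nu_{1,1}(S),\nu_{1,1}(K)$ equals $p$ is precisely Lemma \ref{useful lemma} read through the decomposition $E_p=S_p+K_p$, the coprimality $\gcd(p,s)=1$ for $1\le s\le p-1$ is immediate from primality, and the antisymmetry of $l_{1,1}(K)$ holds because the involution $\alpha$ preserves the $(1,1)$-grading on normal-form monomials, so the homogeneous components of an antisymmetric element are themselves antisymmetric.
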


We have remarked above (after Proposition \ref{family of alpha auto}) that there exists no $\alpha$-endomorphism of $A_1$ of degree exactly $3$.
We hope to check if there exists an $\alpha$-endomorphism of $A_1$ of degree $5$. However, it may not give a clue of what happens in higher prime degrees. 

\begin{remark}
For $f$ an $\alpha$-endomorphism of $A_1$ of prime degree $p = 2$, we have already mentioned that a direct computation shows that $f$ must be of the following form: 
$f(x)= c+ [(4b^2+1)/4b]x+ [(4b^2-1)/4b]y+ a(x^2+ y^2 -2xy)$, 
$f(y)= c+ [(4b^2+1)/4b]y+ [(4b^2-1)/4b]x+ a(x^2+ y^2 -2xy)$.

So, the $(1,1)$ leading term of $f(x)$, $l_{1, 1}(f(x))$, is symmetric.
\end{remark}

\begin{proof}
{}From Lemma \ref{useful lemma}, the $(1, 1)$ leading term of $P$, $l_{1, 1}(P)$, is symmetric or antisymmetric.
Hence, we must show that $l_{1, 1}(P)$ is not symmetric.

Otherwise, $l_{1, 1}(P)$ is symmetric.

We can write $P= E_p+ E_{p-1}+ \ldots + E_1+ E_0$, where $E_p \neq 0$ and $E_i$ ($0 \leq i \leq p$) is $(1, 1)$ homogeneous of $(1, 1)$ degree $i$. Our assumption is that 
$l_{1, 1}(P)= E_p$ is symmetric (so $(E_p)^*= E_p$).

Since $f$ is an $\alpha$-endomorphism of $A_1$, 
$1= [f(y), f(x)]= [P^*, P]= [E_p+ (E_{p-1})^*+ \ldots + (E_1)^*+ E_0, E_p+ E_{p-1}+ \ldots + E_1+ E_0]=$
$[E_p, E_p]+ [E_p, E_{p-1}+ \ldots + E_1+ E_0]+ [(E_{p-1})^*+ \ldots + (E_1)^*+ E_0, E_p]+ 
[(E_{p-1})^*+ \ldots + (E_1)^*+ E_0, E_{p-1}+ \ldots + E_1+ E_0]$
($E_0 \in F$, so $(E_0)^*= E_0$).

Let $E= E_{p-1}+ \ldots + E_1+ E_0$, so $E^*= (E_{p-1})^*+ \ldots + (E_1)^*+ E_0$.
So, $1= [E_p, E]+ [E^*, E_p]+ [E^*, E]= [E_p, E]+ [E_p, -E^*]+ [E^*, E]=$
$[E_p, E-E^*]+ [E^*, E]= [E_p, E-E^*]+ [E^*, E-E^*]= [E_p+ E^*, E-E^*]= [P^*, E-E^*]$.

Therefore, we have $1= [P^*, E-E^*]$.

Let $m$ be maximal among $i \in \{0,1,\ldots,p-2,p-1\}$ such that $E_i-(E_i)^* \neq 0$ 
($m \geq 1$, since $E_0-(E_0)^* = 0$).

Just for convenience, apply $\alpha$ on both sides and get $1= [(E-E^*)^*, (P^*)^*]= [E^*-E, P]= [P, E-E^*]$.

We can apply \cite[Theorem 1.22(2)]{shape of possible counterexamples} to $1= [P, E-E^*]$.

Indeed, $[P, E-E^*]_{1, 1}= 0$, 
since $\nu_{1, 1}([P, E-E^*])= \nu_{1, 1}(1)= 0 < \nu_{1, 1}(P) +\nu_{1, 1}(E-E^*) - 2$ 
(we assumed that $f$ is of degree $\geq 3$, hence $\nu_{1, 1}(P) +\nu_{1, 1}(E-E^*) - 2 \geq 3+ 0- 2= 1$).

Therefore, there exist $0 \neq \lambda \in F$ and $0 \neq \mu \in F$, 

$M,N \in \N$ with $\gcd(M,N)= 1$ and a $(1, 1)$ homogeneous polynomial $R \in F[X,Y]$, such that:

$M/N= \nu_{1, 1}(P) / \nu_{1, 1}(E-E^*)$, the $(1, 1)$ leading term of $P$ is $\lambda R^M$ and 

the $(1, 1)$ leading term of $E-E^*$ is $\mu R^N$. 

{}From $\nu_{1, 1}(P)= p$ and $\nu_{1, 1}(E-E^*)= m$ ($1 \leq m \leq p-1$), 
we get $M/N= \nu_{1, 1}(P) / \nu_{1, 1}(E-E^*)= p/m$.
It is easy to see that $1= \gcd(M,N)$, implies that $M=p$ and $N=m$.
Hence, the $(1, 1)$ leading term of $P$ is $\lambda R^p$ and the $(1, 1)$ leading term of $E-E^*$ is $\mu R^m$. 

Of course, the $(1, 1)$ leading term of $P$ is $E_p$ and the $(1, 1)$ leading term of $E-E^*$ is $E_m-(E_m)^*$. 
Therefore, $E_p= \lambda R^p$ and $E_m-(E_m)^*= \mu R^m$.

Hence we have:\bit
\item $E_p= \lambda R^p$ symmetric ($E_p$ is symmetric by assumption).
\item $E_m-(E_m)^*= \mu R^m$ antisymmetric ($E_m-(E_m)^*$ is, of course, antisymmetric).
\eit

$E_p$ is $(1,1)$ homogeneous of $(1,1)$ degree $p$, so (from $E_p= \lambda R^p$) $R$ must be of degree $1$. $R$ homogeneous of degree $1$ is necessarily of the form $aX+bY$ ($a,b \in F$). Therefore, we have $E_p= \lambda R^p= \lambda (aX+bY)^p$.
Apply Lemma \ref{trivial lemma} to the symmetric $E_p= \lambda (aX+bY)^p$, and get $a=b$, hence $R= a(X+Y)$.
($E_p= \lambda a^p(X+Y)^p$). But then the antisymmetric $E_m-(E_m)^*= \mu R^m= \mu (a(X+Y))^m= \mu a^m (X+Y)^m$, 
which is impossible, since $(X+Y)^m$ is symmetric. Concluding that $l_{1, 1}(P)$ must be antisymmetric.

Next we show that $l_{1, 1}(P)= \lambda(X-Y)^p$: 

Write $f(x)= E_P+E_{p-1}+E_{p-2}+\ldots+E_1+E_0$, where $E_i$ is $(1,1)$ homogeneous of $(1,1)$ degree $i$.
Write $E_i= S_i+K_i$ with $S_i$ symmetric and $(1,1)$ homogeneous of $(1,1)$ degree $i$, and $K_i$ antisymmetric and $(1,1)$ homogeneous of $(1,1)$ degree $i$.
We have just seen that $E_p= K_p$ ($S_p= 0$).

Now, $-1= [f(x),f(x)^*]= [K_p+E_{p-1}+E_{p-2}+\ldots+E_1+E_0, -K_p+(E_{p-1}+E_{p-2}+\ldots+E_1+E_0)^*]=$
$[K_p,(E_{p-1}+E_{p-2}+\ldots+E_1+E_0)^*]+[E_{p-1}+E_{p-2}+\ldots+E_1+E_0, -K_p]+ 
[E_{p-1}+E_{p-2}+\ldots+E_1+E_0, (E_{p-1}+E_{p-2}+\ldots+E_1+E_0)^*]$. 

Let $E= E_{p-1}+ \ldots + E_1+ E_0$, so $E^*= (E_{p-1})^*+ \ldots + (E_1)^*+ E_0$.
So, $-1= [K_p, E^*]+[K_p, E]+ [E, E^*]= [K_p, E^*+E]+ [E, E^*+E]= [k_p+E, E^*+E]= [f(x), E^*+E]$.

Now, $l_{1, 1}(f(x))= K_p$ and $l_{1, 1}(E^*+E)=(E_m)^*+E_m$,
where $m$ is maximal among $i \in \{p-1, p-2, \ldots, 1, 0\}$ such that $(E_i)^*+E_i \neq 0$.

We can apply \cite[Theorem 1.22(2)]{shape of possible counterexamples} to $-1= [f(x), E^*+E]$.

Therefore, there exist $0 \neq \lambda \in F$ and $0 \neq \mu \in F$, 

$M,N \in \N$ with $\gcd(M,N)= 1$ and a $(1, 1)$ homogeneous polynomial $R \in F[X,Y]$, such that:

$M/N= \nu_{1, 1}(f(x)) / [\nu_{1, 1}(E^*+E)$, 

the $(1, 1)$ leading term of $f(x)$ is $\lambda R^M$ and the $(1, 1)$ leading term of $E^*+E$ is $\mu R^N$.

Hence, $K_p= \lambda R^M$ and $(E_m)^*+E_m= \mu R^N$.
 
{}From $\nu_{1, 1}(f(x))= \nu_{1,1}(K_p)= p$ and 

$\nu_{1, 1}(E^*+E)= m$, 

we get 

$M/N= [\nu_{1, 1}(f(x)) / \nu_{1, 1}(E^*+E)]= p/m$.

But $1= \gcd(M,N)$, so $M=p$ and $N=m$.

Hence, $K_p= \lambda R^p$ and $(E_m)^*+E_m= \mu R^m$. 

$K_p$ is homogeneous of $(1,1)$ degree $p$, so (from $K_p= \lambda R^p$) $R$ must be of degree $1$.
$R$ homogeneous of degree $1$ is necessarily of the form $aX+bY$ ($a,b \in F$, with at least one of $a,b$ nonzero).
Therefore, we have $K_p= \lambda R^p= \lambda (aX+bY)^p$.
Apply Lemma \ref{trivial lemma} to the antisymmetric $(\lambda)^{-1} K_p= (aX+bY)^p$, and get $a= -b$, 
hence $R= a(X-Y)$.
Therefore, $K_p= \lambda a^p(X-Y)^p$ and $(E_m)^*+E_m= \mu a^m (X-Y)^m$.
(Actually, $m$ must be even and $\geq 2$, see the following remark \ref{even geq 2}).

In particular, $K_p= \lambda a^p(X-Y)^p$, as claimed.
\end{proof}

\begin{remark}\label{even geq 2}
In the above proof of Theorem \ref{prime degree}, 
it is impossible to have $(E_i)^*+E_i= 0$ for all $i \in \{p-1, p-2, \ldots, 1, 0\}$ 
($E_p$ is antisymmetric, so $(E_p)^*+E_p= 0$). 
Indeed, if $(E_i)^*+E_i= 0$ for all $i \in \{p-1, p-2, \ldots, 1, 0\}$, then
$[f(x), f(y)]= [E_p+E_{p-1}+\ldots+E_1+E_0, (E_p)^*+(E_{p-1})^*+\ldots+(E_1)^*+(E_0)^*]=$
$[E_p+E_{p-1}+\ldots+E_1+E_0, -E_p-E_{p-1}-\ldots-E_1-E_0]=$
$[E_p+E_{p-1}+\ldots+E_1+E_0, -{E_p+E_{p-1}+\ldots+E_1+E_0}]= 0$, a contradiction.

Also, the maximal $j$ ($j \in \{p-1, p-2, \ldots, 1, 0\}$) such that $(E_i)^*+E_i\neq 0$,
 is necessarily $\geq 1$, since otherwise, $(E_j)^*+E_j= 0$ for all $j \in \{p, p-1, p-2, \ldots, 1\}$, 
 hence $[f(x), f(y)]= [E_p+E_{p-1}+\ldots+E_1+E_0, (E_p)^*+(E_{p-1})^*+\ldots+(E_1)^*+(E_0)^*]=$
$[E_p+E_{p-1}+\ldots+E_1+E_0, -E_p-E_{p-1}-\ldots-E_1+E_0]=$
$[E_p+E_{p-1}+\ldots+E_1+E_0, -{E_p+E_{p-1}+\ldots+E_1}+E_0]$.
Let $\tilde{E}= E_p+E_{p-1}+\ldots+E_1$, 
so $[f(x), f(y)]= [\tilde{E}+E_0, -\tilde{E}+E_0]= [\tilde{E}, -\tilde{E}]+ [\tilde{E}, E_0]+ [E_0, -\tilde{E}]+ [E_0, E_0]= 0$, a contradiction.

Moreover, $m$ must be even; otherwise, if we apply \cite[Theorem 1.22(2)]{shape of possible counterexamples} to $-1= [f(x), E^*+E]= [f(x), (E_m)^*+E_m]$, we get (by exactly the same arguments as in the above proof)
$K_p= \lambda a^p(X-Y)^p$ and $(E_m)^*+E_m= \mu a^m (X-Y)^m$.

But $(E_m)^*+E_m= \mu a^m (X-Y)^m$ with $m$ odd is impossible, since $(E_m)^*+E_m$ is symmetric,
 while $\mu a^m (X-Y)^m$ is antisymmetric (it is clear that if $m$ is odd, then $(X-Y)^m$ is antisymmetric).
\end{remark}

\section{An additional result}
The discussion in this section (except for the second subsection: Second idea) relies heavily on results of J.A. Guccione, J.J. Guccione and C. Valqui (brought in \cite{shape of possible counterexamples}) and on a result of Joseph (\cite[Corollary 5.5]{joseph}), as one will clearly see.  

We continue to assume that $\Char(F)= 0$.

Recall the following definition which appears in \cite[Definition 3.1]{shape of possible counterexamples}:

\begin{defin}\label{old definitions} 
Let $f$ be an endomorphism of $A_1$.
$f$ is an \textit{irreducible} endomorphism if the following two conditions are satisfied:\bit

\item $\nu_{1,1}(f(x)) \geq 2$ and $\nu_{1,1}(f(y)) \geq 2$.

\item For every pair of automorphisms $a,b$ of $A_1$: 

$\nu_{1,1}((afb)(x)) + \nu_{1,1}((afb)(y)) \geq \nu_{1,1}(f(x)) + \nu_{1,1}(f(y))$.

\eit 
A pair $(P,Q)$ of elements of $A_1$ is an \textit{irreducible} pair, if there exists an irreducible endomorphism $f$ of $A_1$ such that $P= f(x)$ and $Q= f(y)$.
\end{defin}

We adjust the above definition to our starred setting.

\begin{defin}[An $\alpha$-irreducible $\alpha$-endomorphism]
Let $f$ be an $\alpha$-endomorphism of $A_1$.
$f$ is an \textit{$\alpha$-irreducible} $\alpha$-endomorphism if the following two conditions are satisfied:\bit
\item [(1)] $\nu_{1,1}(f(x)) \geq 2$ and $\nu_{1,1}(f(y)) \geq 2$.
\item [(2)] For every pair of $\alpha$-automorphisms $a,b$ of $A_1$: 
$\nu_{1,1}((afb)(x)) + \nu_{1,1}((afb)(y)) \geq \nu_{1,1}(f(x)) + \nu_{1,1}(f(y))$.
\eit
A pair $(P,Q)$ of elements of $A_1$ is an \textit{$\alpha$-irreducible} pair, if there exists an $\alpha$-irreducible $\alpha$-endomorphism $f$ of $A_1$ such that $P= f(x)$ and $Q= f(y)$.
\end{defin}

Recall the following theorem which is \cite[Theorem 3.3]{shape of possible counterexamples}:
``If there is no irreducible endomorphism, then every endomorphism of $A_1$ is an automorphism".

Similarly, we have:
\begin{thm}\label{starred thm 3.3}
Assume that there are no $\alpha$-irreducible $\alpha$-endomorphisms of $A_1$. Then every $\alpha$-endomorphism of $A_1$ is an ($\alpha$-)automorphism.
\end{thm}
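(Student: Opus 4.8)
The plan is to argue by contraposition through a minimal-degree (descent) argument, exactly parallel to the proof of \cite[Theorem 3.3]{shape of possible counterexamples}, but carried out inside the group of $\alpha$-automorphisms rather than the full automorphism group. Suppose, for contradiction, that some $\alpha$-endomorphism $f$ of $A_1$ fails to be an automorphism. From this single $f$ I would manufacture an $\alpha$-irreducible $\alpha$-endomorphism, contradicting the hypothesis.

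First I would record the required closure facts. If $a,b$ are $\alpha$-automorphisms and $f$ is an $\alpha$-endomorphism, then $afb$ is again an $\alpha$-endomorphism: from $a\alpha=\alpha a$, $f\alpha=\alpha f$ and $b\alpha=\alpha b$ one gets $(afb)\alpha=\alpha(afb)$. Moreover the $\alpha$-automorphisms form a group, being closed under composition (the same computation shows $(ab)\alpha=\alpha(ab)$) and under inverse (from $a\alpha=\alpha a$ one deduces $a^{-1}\alpha=\alpha a^{-1}$); and $afb$ is an automorphism if and only if $f$ is. Hence the orbit $\mathcal{O}=\{\,afb : a,b \text{ are }\alpha\text{-automorphisms}\,\}$ is a nonempty set (it contains $f$ itself) consisting entirely of non-automorphisms.

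Next I would perform the descent. Every $g\in\mathcal{O}$ is an $\alpha$-endomorphism, so $\nu_{1,1}(g(x))=\nu_{1,1}(g(y))$, and $\nu_{1,1}(g(x))+\nu_{1,1}(g(y))$ is a nonnegative integer. The set of these values, as $g$ ranges over $\mathcal{O}$, is a nonempty subset of $\N$ and so attains a minimum; fix $g\in\mathcal{O}$ realizing it. I claim $g$ is $\alpha$-irreducible. For condition~(1): $g$ is not an automorphism, and since a degree-$1$ $\alpha$-endomorphism is an automorphism (Remark~\ref{degree 1 is auto}) while degree $0$ is impossible (it would force $g(x),g(y)\in F$ and hence $[g(y),g(x)]=0\neq 1$), the degree of $g$ is at least $2$, giving $\nu_{1,1}(g(x))=\nu_{1,1}(g(y))\geq 2$. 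For condition~(2): given arbitrary $\alpha$-automorphisms $c,d$, writing $g=afb$ yields $cgd=(ca)f(bd)$ with $ca,bd$ again $\alpha$-automorphisms, so $cgd\in\mathcal{O}$, whence by minimality $\nu_{1,1}((cgd)(x))+\nu_{1,1}((cgd)(y))\geq \nu_{1,1}(g(x))+\nu_{1,1}(g(y))$. Thus $g$ is an $\alpha$-irreducible $\alpha$-endomorphism, the desired contradiction.

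I do not anticipate a serious obstacle: the real content is simply that the descent in \cite[Theorem 3.3]{shape of possible counterexamples} respects the involution, so it replays verbatim inside the $\alpha$-automorphism subgroup. The only points genuinely deserving care are the two closure statements, namely that $afb$ remains an $\alpha$-endomorphism and that $\alpha$-automorphisms are closed under composition, since without them the orbit $\mathcal{O}$ would fail to be stable under the pre- and post-composition used to verify condition~(2). Once these short verifications with $f\alpha=\alpha f$ are in place, the well-ordering of $\N$ closes the argument.
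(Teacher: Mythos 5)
Your proof is correct and follows essentially the same minimal-degree descent as the paper's own argument: both pick a non-automorphism $\alpha$-endomorphism minimizing $\nu_{1,1}(\cdot(x))+\nu_{1,1}(\cdot(y))$, use closure of $\alpha$-automorphisms under composition to show condition (2) of $\alpha$-irreducibility holds at the minimum, and rule out degrees $0$ and $1$ via Remark \ref{degree 1 is auto} to contradict the hypothesis. The only (cosmetic) difference is that you minimize over the orbit $\{afb\}$ of a single counterexample and conclude the minimizer is $\alpha$-irreducible directly, whereas the paper minimizes over the set of all non-onto $\alpha$-endomorphisms and derives the contradiction by showing condition (1) must fail.
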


\begin{proof}
Otherwise, there exists an $\alpha$-endomorphism of $A_1$ which is not an automorphism (more accurately, there exists an $\alpha$-endomorphism of $A_1$ which is not onto). 
Let $A$ be the set of all $\alpha$-endomorphisms of $A_1$ which are not onto. 
For each $g \in A$, one can associate the unique natural number $\nu_{1,1}(g(x)) + \nu_{1,1}(g(y))$ 
(since $g$ is an $\alpha$-endomorphism, $\nu_{1,1}(g(x))= \nu_{1,1}(g(y))$).

Of course, the set of those numbers, 
$\{ \nu_{1,1}(g(x)) + \nu_{1,1}(g(y)) | g \in A \}$ has a minimum, denote it by $m$ ($m$ must be even). 
{}From all those $g \in A$ for which $\nu_{1,1}(g(x)) + \nu_{1,1}(g(y))= m$, choose one such, and denote it by $f$. 

By assumption, there are no $\alpha$-irreducible $\alpha$-endomorphisms of $A_1$, hence $f$ (as an $\alpha$-endomorphism) is necessarily $\alpha$-reducible.
This means that the first condition is not satisfied by $f$ or the second condition is not satisfied by $f$ (the conditions in the definition of an $\alpha$-irreducible $\alpha$-endomorphism).

By our special choice of $f$ as an element of $A$ such that for every $h \in A$,
$\nu_{1,1}(f(x)) + \nu_{1,1}(f(y)) \leq \nu_{1,1}(h(x)) + \nu_{1,1}(h(y))$, we get that the second condition must be satisfied by $f$; 
otherwise, there exist $\alpha$-automorphisms $a,b$ of $A_1$ such that 
$\nu_{1,1}((afb)(x)) + \nu_{1,1}((afb)(y)) < \nu_{1,1}(f(x)) + \nu_{1,1}(f(y))= m$.

Claim: $afb \in A$.
Proof of claim: 
\bit 
\item $afb$ is an $\alpha$-endomorphism, since $a$, $f$ and $b$ are $\alpha$-endomorphisms.
\item $afb$ is not onto; otherwise, $t= afb$ is an automorphism, then composing $a^{-1}$ on the left and $b^{-1}$ on the right (remember that $a$ and $b$ are automorphisms), we get $a^{-1}tb^{-1}= f$. But $a^{-1}$, $t$, and $b^{-1}$ are automorphisms, hence $a^{-1}tb^{-1}= f$ is an automorphism, a contradiction, since $f \in A$ ($A$ is the set of all $\alpha$-endomorphisms of $A_1$ which are not onto).
\eit

But $afb \in A$ and $\nu_{1,1}((afb)(x)) + \nu_{1,1}((afb)(y)) < \nu_{1,1}(f(x)) + \nu_{1,1}(f(y))= m$ contradicts the minimality of $m$.

Therefore, necessarily the first condition is not satisfied by $f$, 
so $\nu_{1,1}(f(x)) < 2$ or $\nu_{1,1}(f(y)) < 2$.
(actually, $\nu_{1,1}(f(x))= \nu_{1,1}(f(y))$, 
so $\nu_{1,1}(f(x))= \nu_{1,1}(f(y)) < 2$).

Notice that $\nu_{1,1}(f(x)) \neq 0$, since $\nu_{1,1}(f(x))= 0$ implies that $f(x)= \alpha_{00} \in F$,
which is impossible, because $[f(y), f(x)]= 1$ (and $f(x)= \alpha_{00}$ would imply $[f(y), f(x)]= 0$).

Therefore, $\nu_{1,1}(f(y))= \nu_{1,1}(f(x))= 1$, namely, $f$ is an $\alpha$-endomorphism of degree $1$. But we have mentioned in Remark \ref{degree 1 is auto} that an $\alpha$-endomorphism of degree $1$ is an automorphism, a contradiction to $f \in A$.

Concluding that every $\alpha$-endomorphism of $A_1$ is an automorphism.
\end{proof}

(Observe that in order to prove Theorem \ref{starred thm 3.3} it was necessary to demand in the definition of an $\alpha$-irreducible $\alpha$-endomorphism that $a$ and $b$ are $\alpha$-automorphisms).

It is unknown whether or not an irreducible endomorphism exists. It is also unknown whether or not an $\alpha$-irreducible endomorphism exists.

We move to discuss two ideas concerning the original Dixmier's conjecture: 

\subsection{First idea}
There exists a nice connection between an $\alpha$-endomorphism and a reducible endomorphism. 
\begin{prop}\label{every alpha is reducible}
Every $\alpha$-endomorphism is a reducible endomorphism.
\end{prop}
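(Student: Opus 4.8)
The plan is to show directly that an $\alpha$-endomorphism $f$ fails to satisfy at least one of the two conditions in Definition \ref{old definitions}, so that it is not irreducible, i.e.\ it is reducible. Recall that an $\alpha$-endomorphism satisfies $f(y)= f(x)^*$, and that if $n$ denotes its degree then $\nu_{1,1}(f(x))= \nu_{1,1}(f(y))= n$. Moreover $n \geq 1$, since $\nu_{1,1}(f(x))= 0$ would force $f(x) \in F$ and hence $[f(y),f(x)]= 0 \neq 1$. If $n= 1$, then $\nu_{1,1}(f(x))= 1 < 2$, so the first condition of Definition \ref{old definitions} already fails and $f$ is reducible (consistent with Remark \ref{degree 1 is auto}). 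Thus the real content lies in the case $n > 1$, where I would instead violate the second condition.

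For $n > 1$ I would invoke Lemma \ref{useful lemma}, which tells us that the leading term $l_{1,1}(f(x))$ is symmetric or antisymmetric. The key observation is that the exchange involution $*$ is $F$-linear and preserves the $(1,1)$ degree (it sends $x^iy^j \mapsto x^jy^i$), so it commutes with taking the $(1,1)$ leading term; hence $l_{1,1}(f(y))= l_{1,1}(f(x)^*)= (l_{1,1}(f(x)))^*$. Therefore $l_{1,1}(f(y))= l_{1,1}(f(x))$ if $l_{1,1}(f(x))$ is symmetric, and $l_{1,1}(f(y))= -l_{1,1}(f(x))$ if it is antisymmetric. In either case there is a sign $\eta \in \{+1,-1\}$ with $l_{1,1}(f(y))= \eta\, l_{1,1}(f(x))$, so the leading terms of $f(x)$ and $\eta f(y)$ cancel: $\nu_{1,1}(f(x)- \eta f(y)) < n$. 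Since $f(x)- \eta f(y)= f(x- \eta y)$ and $f$ is injective (as $A_1$ is simple and $x-\eta y \neq 0$), this element is nonzero and of $(1,1)$ degree strictly less than $n$.

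To exhibit the failure of the second condition I would take $a$ to be the identity and $b$ the linear automorphism of $A_1$ given by $b(x)= x- \eta y$, $b(y)= y$; this is an automorphism because $[b(y),b(x)]= [y, x- \eta y]= 1$ and it is invertible. Then $(fb)(x)= f(x- \eta y)$ has $(1,1)$ degree $< n$, while $(fb)(y)= f(y)$ still has degree $n$, so $\nu_{1,1}((fb)(x))+ \nu_{1,1}((fb)(y)) < 2n= \nu_{1,1}(f(x))+ \nu_{1,1}(f(y))$. This contradicts the second condition, so $f$ is reducible, completing the argument.

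The calculations here are routine; the two points deserving care are that the auxiliary $b$ really is an automorphism and that the leading terms genuinely cancel, for which Lemma \ref{useful lemma} is essential. I would emphasize that $b$ is an \emph{ordinary} automorphism and in general not an $\alpha$-automorphism (indeed the shear $x \mapsto x-\eta y$, $y \mapsto y$ does not commute with $\alpha$), which is precisely what the notion of a reducible endomorphism permits. The conceptual obstacle the proposition sidesteps is that one cannot reduce $f$ by composing with $f^{-1}$, as one would for an automorphism, since the surjectivity of $f$ is exactly what is in question; Lemma \ref{useful lemma} supplies the degree-lowering composition without ever assuming $f$ is onto.
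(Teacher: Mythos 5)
Your proof is correct, but it takes a genuinely different route from the paper's. The paper disposes of the proposition in one line: since $\nu_{1,1}(f(x))=\nu_{1,1}(f(y))$ for an $\alpha$-endomorphism, the greatest common divisor of the two degrees equals each of them, and \cite[Proposition 3.8]{shape of possible counterexamples} (which rules out such a divisibility relation between the degrees of an irreducible pair) immediately gives reducibility. You instead prove the needed special case of that fact by hand: Lemma \ref{useful lemma} forces $l_{1,1}(f(y))=(l_{1,1}(f(x)))^*=\eta\, l_{1,1}(f(x))$ with $\eta=\pm 1$ (your observation that $\alpha$ acts on normal forms by $x^iy^j\mapsto x^jy^i$, hence preserves $(1,1)$-degree and commutes with $l_{1,1}$, is exactly how the paper itself uses $(E_n)^*$ inside the proof of that lemma), and then the shear $b(x)=x-\eta y$, $b(y)=y$ --- a genuine automorphism, as you verify via $[b(y),b(x)]=1$ and the explicit inverse --- kills the leading term of $(fb)(x)$ while leaving $(fb)(y)$ of degree $n$, violating condition (2) of Definition \ref{old definitions}; your separate treatment of degree $1$ via condition (1), and your use of simplicity of $A_1$ to guarantee $f(x-\eta y)\neq 0$ so that its degree is defined, are both needed and both correct. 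What the paper's citation buys is brevity and generality (Proposition 3.8 of Guccione--Guccione--Valqui handles any divisibility between the degrees, not just equality); what your argument buys is an explicit, self-contained mechanism --- the degree-lowering composition --- that makes visible \emph{why} equal degrees force reducibility, though note that the reliance on Guccione--Guccione--Valqui is relocated rather than eliminated, since Lemma \ref{useful lemma} itself rests on \cite[Theorem 1.22(2)]{shape of possible counterexamples}. Your closing remark that $b$ is an ordinary automorphism and typically not an $\alpha$-automorphism is also well taken: it is precisely the gap between Definition \ref{old definitions} and the $\alpha$-irreducible notion, and it explains why the proposition concerns ordinary reducibility only.
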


\begin{proof}
Follows from \cite[Proposition 3.8]{shape of possible counterexamples}.
Shortly, let $f$ be an $\alpha$-endomorphism of $A_1$.
Clearly, $\nu_{1,1}(f(x))= \nu_{1,1}(f(y))$, so the greatest common divisor of $\nu_{1,1}(f(x))$ and $\nu_{1,1}(f(y))$ is $\nu_{1,1}(f(x))= \nu_{1,1}(f(y))$. Hence, \cite[Proposition 3.8]{shape of possible counterexamples} implies that $f$ is reducible.
\end{proof}

In view of Proposition \ref{every alpha is reducible}, it would be nice to find some ``density" theorem concerning endomorphisms and $\alpha$-endomorphisms of $A_1$. If one can somehow show that every endomorphism is a ``limit" of $\alpha$-endomorphisms, and a limit of reducible endomorphisms is also reducible, then from \cite[Theorem 3.3]{shape of possible counterexamples} we get that the original Dixmier's conjecture is true.

\subsection{Second idea}

Assume in this subsection that the starred Dixmier's conjecture is true, namely, every $\alpha$-endomorphism of $A_1(F)$ is an automorphism.
Then we have the following:

\begin{prop}\label{beta gamma}
Let $f$ be an endomorphism of $A_1$. Assume that there exist involutions $\beta$ and $\gamma$, each is conjugate to $\alpha$ by an automorphism, such that $f\beta= \gamma f$ (in other words, $f$ is a $(\beta, \gamma)$-endomorphism). Then $f$ is an automorphism.
\end{prop}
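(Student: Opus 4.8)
The plan is to reduce the $(\beta,\gamma)$-setting to the starred ($\alpha$) setting by conjugating $f$ with the very automorphisms that carry $\beta$ and $\gamma$ to $\alpha$, and then to quote the standing assumption of this subsection (that every $\alpha$-endomorphism of $A_1$ is an automorphism). First I would record the hypothesis precisely. Since $\beta$ and $\gamma$ are each conjugate to $\alpha$ by an automorphism (in the sense recalled earlier, where $g^{-1}\alpha g$ is an involution), there are automorphisms $a,b$ of $A_1$ with $\beta = a^{-1}\alpha a$ and $\gamma = b^{-1}\alpha b$. The hypothesis $f\beta = \gamma f$ then reads $f a^{-1}\alpha a = b^{-1}\alpha b f$, which after multiplying on the right by $a^{-1}$ becomes $f a^{-1}\alpha = b^{-1}\alpha b f a^{-1}$.

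Next I would introduce the candidate $\alpha$-endomorphism $g := b f a^{-1}$. Being a composition of the endomorphism $f$ with the automorphisms $a^{-1}$ and $b$, it is certainly an endomorphism of $A_1$. The crux is to check that $g$ commutes with $\alpha$: using the rewritten hypothesis, $g\alpha = b f a^{-1}\alpha = b\,(b^{-1}\alpha b f a^{-1}) = \alpha\, b f a^{-1} = \alpha g$. Hence $g$ is an $\alpha$-endomorphism of $A_1$ in the sense of Definition \ref{defin alpha endo}.

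Now I would invoke the assumption of this subsection, that the starred Dixmier's conjecture holds, i.e. every $\alpha$-endomorphism of $A_1$ is an automorphism. Applying this to $g$ yields that $g$ is an automorphism. Finally, from $g = b f a^{-1}$ one recovers $f = b^{-1} g a$, a composition of the three automorphisms $b^{-1}$, $g$ and $a$, and therefore $f$ is an automorphism of $A_1$, as desired.

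I do not expect a genuine obstacle here: the whole content is conjugation bookkeeping, and the only thing to watch is selecting the correct one-sided composition $g = b f a^{-1}$ (rather than, say, $a f b^{-1}$) so that the two stray occurrences of $\alpha$ cancel in exactly the order produced by the relation $f\beta = \gamma f$. The substantive input is carried entirely by the assumed truth of the starred conjecture; everything else is the observation that conjugating a $(\beta,\gamma)$-endomorphism by the automorphisms trivializing $\beta$ and $\gamma$ returns an honest $\alpha$-endomorphism.
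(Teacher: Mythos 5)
Your proposal is correct and follows essentially the same route as the paper's own proof: writing $\beta$ and $\gamma$ as conjugates of $\alpha$, forming the conjugated map $bfa^{-1}$ (the paper's $hfg^{-1}$), verifying that it commutes with $\alpha$, invoking the assumed starred conjecture, and recovering $f$ as a composition of automorphisms. The bookkeeping, including the choice of the one-sided composition so that the two occurrences of $\alpha$ cancel, matches the paper exactly.
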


\begin{proof}
$\beta$ is conjugate to $\alpha$ by an automorphism means that there exists $g \in \Aut_F(A_1(F))$
such that $\beta=g^{-1}\alpha g$.
$\gamma$ is conjugate to $\alpha$ by an automorphism means that there exists $h \in \Aut_F(A_1(F))$
such that $\gamma=h^{-1}\alpha h$.
By assumption $f\beta= \gamma f$, hence
$fg^{-1}\alpha g= h^{-1}\alpha hf$.
Then, $h(fg^{-1}\alpha g)g^{-1}= h(h^{-1}\alpha hf)g^{-1}$,
so we have $(hfg^{-1}) \alpha= \alpha (hfg^{-1})$.
This means that $hfg^{-1}$ is an $\alpha$-endomorphism of $A_1$.
Therefore (remember that in this subsection we assume that the starred Dixmier's conjecture is true) $hfg^{-1}$ is an automorphism.
Then clearly $f= 1f1= (h^{-1}h)f(g^{-1}g)= h^{-1}(hfg^{-1})g$ is an automorphism, 
as a product of three automorphisms: $h^{-1}$, $hfg^{-1}$ and $g$.
\end{proof}

\begin{thm}\label{beta gamma dixmier}
If for any endomorphism $f$ of $A_1$, there exist involutions $\beta$ and $\gamma$ (each is conjugate to $\alpha$ by an automorphism) such that $f\beta= \gamma f$, then Dixmier's conjecture is true.
(We continue to assume that the starred Dixmier's conjecture is true).
\end{thm}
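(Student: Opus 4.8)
The plan is to observe that Theorem \ref{beta gamma dixmier} is essentially an immediate corollary of Proposition \ref{beta gamma}, so the proof amounts to quantifying that proposition over all endomorphisms. Dixmier's conjecture asserts precisely that every endomorphism of $A_1$ is an automorphism, so it suffices to show that an arbitrary endomorphism $f$ is an automorphism. (Recall that over a field of characteristic $0$ the algebra $A_1$ is simple, so every endomorphism is automatically injective; thus ``automorphism'' and ``onto'' coincide here, and I only need to produce surjectivity, which Proposition \ref{beta gamma} already packages for me.)

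First I would fix an arbitrary endomorphism $f$ of $A_1$. By the hypothesis of the theorem, there exist involutions $\beta$ and $\gamma$ of $A_1$, each conjugate to the exchange involution $\alpha$ by an automorphism, such that $f\beta = \gamma f$; in other words, $f$ is a $(\beta,\gamma)$-endomorphism in the sense introduced after Definition \ref{defin alpha endo}. I would then invoke Proposition \ref{beta gamma} directly: under the standing assumption of this subsection that the starred Dixmier's conjecture holds, any $(\beta,\gamma)$-endomorphism with $\beta$ and $\gamma$ each conjugate to $\alpha$ is an automorphism. Hence this particular $f$ is an automorphism. Since $f$ was an arbitrary endomorphism, every endomorphism of $A_1$ is an automorphism, which is exactly Dixmier's conjecture.

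The hard part is that there is essentially no hard part in the deduction itself, because all the genuine work has already been carried out in Proposition \ref{beta gamma}, whose proof conjugates the relation $f\beta = \gamma f$ into the relation $(hfg^{-1})\alpha = \alpha(hfg^{-1})$ and then applies the assumed truth of the starred conjecture to the $\alpha$-endomorphism $hfg^{-1}$. The real difficulty is instead concealed in the two standing assumptions: the hypothesis that \emph{every} endomorphism $f$ admits such a compatible pair of involutions $\beta,\gamma$, and the blanket assumption that the starred Dixmier's conjecture is true. Neither of these is established here, so I would present the theorem honestly as a conditional reduction of Dixmier's conjecture to the starred conjecture (plus the involution-existence hypothesis), rather than as an unconditional result; the value of the statement lies in exhibiting that reduction cleanly, not in any intricate argument within the proof.
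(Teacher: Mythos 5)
Your proposal is correct and matches the paper's proof exactly: the paper likewise fixes an arbitrary endomorphism $f$ and immediately concludes from Proposition \ref{beta gamma} (under the hypothesis supplying $\beta$, $\gamma$ and the standing assumption of the starred conjecture) that $f$ is an automorphism. Your added remarks on where the real difficulty hides are accurate but not part of the paper's argument, which is just this one-line deduction.
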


\begin{proof}
Let $f$ be an endomorphism of $A_1$.
By Proposition \ref{beta gamma}, $f$ is an automorphism.
\end{proof}

However, we do not know if the condition in Theorem \ref{beta gamma dixmier} is true; 
namely, we do not know if for any endomorphism $f$ of $A_1$, there exist involutions $\beta$ and $\gamma$ 
(each is conjugate to $\alpha$ by an automorphism) such that $f\beta= \gamma f$.

Notice that this condition is equivalent to the following condition:
For any endomorphism $f$ of $A_1$, there exist $g,h \in \Aut_F(A_1(F))$ such that $hfg^{-1}$ is an $\alpha$-endomorphism.
 
\subsection{Third idea}
We suggest to combine Joseph's result \cite[Corollary 5.5]{joseph} and J.A. Guccione, J.J. Guccione and Valqui result \cite[Theorem 5.11]{shape of possible counterexamples}, in order to try to prove that the original Dixmier's conjecture is true.

\cite[Theorem 5.11]{shape of possible counterexamples} says the following: ``Let $(P,Q)$ be an irreducible pair. Then there exist $\mu_P, \mu_Q \in F^*$, 
$a,b,m,n \in \N$ and $g \in \Aut_F(A_1(F))$, such that $m,n > 1$, $\gcd(m,n)= 1$, $1 \leq a < b$ and
$l_{1,1}(g(P))= \mu_P x^{am}y^{bm}$, 
$l_{1,1}(g(Q))= \mu_Q x^{an}y^{bn}$, 
$\nu_{1,1}(g(P))= \nu_{1,1}(P)$,
$\nu_{1,1}(g(Q))= \nu_{1,1}(Q)$.
Furthermore, $g(P)$ and $g(Q)$ are subrectangular and the pair $(g(P),g(Q))$ is irreducible".

\begin{remark}\label{remark 1 irr onto}
In the above irreducible pair $(g(P),g(Q))$,
there exist $\tilde{i},\tilde{j} \in \N$ with $0 \leq \tilde{j} < \tilde{i} \leq am$ such that $x^{\tilde{i}}y^{\tilde{j}}$ appears in $g(P)$,
and there exist $i,j \in \N$ with $0 \leq j < i \leq an$ such that $x^{i}y^{j}$ appears in $g(Q)$.
This follows from \cite[Proposition 3.6]{shape of possible counterexamples}.
\end{remark}
Now, in view of \cite[Theorem 3.3]{shape of possible counterexamples}, one wishes to show that there is no irreducible endomorphism (since then the original Dixmier's conjecture is true).

Actually, only the existence of an irreducible endomorphism which is not onto is problematic, namely: 
\begin{thm}\label{new thm 3.3}
If every endomorphism of $A_1$ is reducible or irreducible which is onto, then Dixmier's conjecture is true.
\end{thm}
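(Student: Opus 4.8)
The plan is to run the minimal-counterexample argument of \cite[Theorem 3.3]{shape of possible counterexamples}, precisely as reproduced in the proof of Theorem \ref{starred thm 3.3}, but in the ordinary (non-starred) setting, using the hypothesis only at the single step where reducibility is invoked. Suppose, toward a contradiction, that Dixmier's conjecture fails, and let $A$ be the set of all endomorphisms of $A_1$ that are not onto; by assumption $A \neq \emptyset$. To each $g \in A$ I attach the natural number $v(g) = \nu_{1,1}(g(x)) + \nu_{1,1}(g(y))$, set $m = \min\{v(g) \mid g \in A\}$, and fix some $f \in A$ with $v(f) = m$.

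The hypothesis enters exactly once. Since $f$ is not onto, $f$ cannot be an irreducible endomorphism which is onto; hence, by assumption, $f$ must be reducible. This is the only place the weakened hypothesis is used: in \cite[Theorem 3.3]{shape of possible counterexamples} one assumes there are \emph{no} irreducible endomorphisms, whereas here I only need that a non-onto endomorphism is reducible. Reducibility means that the first condition or the second condition of Definition \ref{old definitions} fails for $f$. I would then rule out failure of the second condition by minimality, exactly as in Theorem \ref{starred thm 3.3}: if there were automorphisms $a,b$ with $\nu_{1,1}((afb)(x)) + \nu_{1,1}((afb)(y)) < m$, then $afb$ would again lie in $A$ (it is not onto, since otherwise $f = a^{-1}(afb)b^{-1}$ would be a product of automorphisms, hence onto), contradicting the minimality of $m$. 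Therefore the second condition holds, so the first condition fails, i.e. $\nu_{1,1}(f(x)) < 2$ or $\nu_{1,1}(f(y)) < 2$. As in the proof of Theorem \ref{starred thm 3.3}, neither degree can be $0$, since $f(x) \in F$ or $f(y) \in F$ would force $[f(y),f(x)] = 0 \neq 1$; hence one of the two degrees equals $1$.

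It remains to show that a degree-$1$ image forces $f$ to be an automorphism, contradicting $f \in A$. Say $\nu_{1,1}(f(x)) = 1$, so $f(x) = \lambda x + \mu y + c$ with $(\lambda,\mu) \neq (0,0)$. A routine affine automorphism $a$ (built from an $SL_2$ linear part and a translation) sends $f(x)$ to $x$; then $af$ is an endomorphism with $(af)(x) = x$, and the relation $[(af)(y), x] = 1$ together with $[x,w] = -(\partial/\partial y)(w)$ forces $(\partial/\partial y)\bigl((af)(y)\bigr) = 1$, so $(af)(y) = y + r(x)$ for some $r \in F[x]$. Thus $af$ is the shear automorphism $x \mapsto x,\ y \mapsto y + r(x)$, and $f = a^{-1}(af)$ is an automorphism, contradicting $f \in A$. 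The case $\nu_{1,1}(f(y)) = 1$ is entirely symmetric. Hence $A = \emptyset$ and Dixmier's conjecture holds.

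I expect essentially no obstacle: the argument is the same minimal-degree descent already carried out in detail for Theorem \ref{starred thm 3.3}, and the degree-$1$ endgame in the last paragraph is identical to the one used there (and in \cite[Theorem 3.3]{shape of possible counterexamples}). The only genuinely new observation — and it is a one-line one — is that the weaker hypothesis ``reducible or irreducible-which-is-onto'' still guarantees reducibility of the minimal non-onto endomorphism $f$, which is all the descent needs.
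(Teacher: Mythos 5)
Your proof is correct and takes essentially the same route as the paper's: the paper's proof of Theorem \ref{new thm 3.3} splits on whether the non-onto endomorphism is reducible or irreducible and defers the reducible case to the minimal-counterexample descent of \cite[Theorem 3.3]{shape of possible counterexamples}, which is exactly the argument you carry out (mirroring Theorem \ref{starred thm 3.3}), with the weakened hypothesis invoked once to ensure the minimal non-onto endomorphism is reducible. Your write-up merely makes explicit the details the paper leaves to the citation, including the degree-one endgame.
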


\begin{proof}
Otherwise, let $\psi$ be an endomorphism of $A_1$ which is not onto. There are two options:\bit
\item $\psi$ is reducible. Then continue in a similar way as in the proof of \cite[Theorem 3.3]{shape of possible counterexamples} to get a contradiction, namely, to get that $\psi$ is onto (use our assumption that every irreducible endomorphism is onto).
\item $\psi$ is irreducible. By our assumption that each irreducible endomorphism is onto, we get that, in particular, $\psi$ is onto. But we have taken $\psi$ not onto.
\eit
Concluding that every endomorphism of $A_1$ is onto.
\end{proof}

So our aim is to show that every irreducible endomorphism of $A_1$ is onto (Theorem \ref{irr onto}), since then by Theorem \ref{new thm 3.3} the original Dixmier's conjecture is true.

Notations untill the end of this section: $\psi^{(M)}$ is a homomorphism of $A_1$ into $A_1^{(m)}$ with 
$\psi^{(M)}(x)= A= A_l+\epsilon$, $\psi^{(M)}(y)= B= B_l+\delta$, 
where $A_l= l_{1,-1}(A)$ and $B_l= l_{1,-1}(B)$ ($A,B,A_l,B_l,\epsilon,\delta \in A_1^{(M)}$).

$\nu_{1, -1}(A_l)= \tilde{a} \in \Q$, $\nu_{1, -1}(B_l)= \tilde{b} \in \Q$.

\begin{remark}\label{remark 2 irr onto}
Notice that: \bit
\item $\psi^{(M)}$ must be injective, since any homomorphism of $A_1$ into $A_1^{(m)}$ is injective ($A_1$ is simple).
\item $\nu_{1, -1}(\epsilon) < \nu_{1, -1}(A_l)= \tilde{a}$
and $\nu_{1, -1}(\delta) < \nu_{1, -1}(B_l)= \tilde{b}$.
\eit
\end{remark}

\begin{lem}\label{first lemma}
Then there are three options:\bit
\item $\tilde{a}>0$ and $\tilde{b} \leq 0$.
\item $\tilde{a} \leq 0$ and $\tilde{b}>0$.
\item $\tilde{a}>0$ and $\tilde{b}>0$.
\eit
Actually, in any option, we have $\tilde{a}+\tilde{b}\geq 0$.
\end{lem}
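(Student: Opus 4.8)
The plan is to derive the whole statement from the single relation $[B,A]=\psi^{(M)}([y,x])=1$ (valid since $\psi^{(M)}$ is an algebra homomorphism of $A_1$, and $[y,x]=yx-xy=1$) together with the additivity of the $(1,-1)$ degree supplied by Proposition \ref{degree}. First I would record the structural fact that, because the defining relation $yx-xy=1$ is itself $(1,-1)$ homogeneous of degree $0$, the algebra $A_1^{(M)}$ is genuinely $\tfrac{1}{M}\Z$ graded by the $(1,-1)$ degree. In particular leading terms are multiplicative, $l_{1,-1}(uv)=l_{1,-1}(u)\,l_{1,-1}(v)$, which is exactly the assertion that the equality $\nu_{1,-1}(uv)=\nu_{1,-1}(u)+\nu_{1,-1}(v)$ of Proposition \ref{degree} involves no cancellation in top degree.

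Next I would prove the final inequality $\tilde{a}+\tilde{b}\ge 0$. Assume for contradiction $\tilde{a}+\tilde{b}<0$. Then by Proposition \ref{degree} we have $\nu_{1,-1}(AB)=\nu_{1,-1}(BA)=\tilde{a}+\tilde{b}<0$, whence $\nu_{1,-1}(BA-AB)\le\tilde{a}+\tilde{b}<0$. But $BA-AB=[B,A]=1$ has $\nu_{1,-1}(1)=0$, a contradiction; so $\tilde{a}+\tilde{b}\ge0$. This already disposes of the closing sentence of the lemma and is consistent with each of the three displayed options.

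It then remains to exclude the case $\tilde{a}\le0$ and $\tilde{b}\le0$, since the three displayed options are precisely its complement. By the inequality just proved, $\tilde{a}\le0$ and $\tilde{b}\le0$ would force $\tilde{a}=\tilde{b}=0$, so I only need to rule out $\tilde{a}=\tilde{b}=0$. The key point is that the degree $0$ homogeneous component of $A_1^{(M)}$ is commutative: in the normal form $\sum\gamma_{ij}y^ix^{j/M}$ a monomial has $(1,-1)$ degree $-i+j/M$, so degree $0$ forces $j=Mi$ (hence $x^{j/M}=x^i$), and the component is spanned by the elements $y^ix^i$, i.e.\ it is $F[yx]=F[xy]$, which is commutative. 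Thus if $\tilde{a}=\tilde{b}=0$, then $A_l$ and $B_l$ are homogeneous of degree $0$, lie in this commutative subalgebra, and satisfy $B_lA_l=A_lB_l$. By multiplicativity of leading terms $l_{1,-1}(BA)=B_lA_l=A_lB_l=l_{1,-1}(AB)$, so the top terms cancel in $BA-AB$, giving $\nu_{1,-1}(BA-AB)<0$ and again contradicting $BA-AB=1$. Hence not both $\tilde{a},\tilde{b}$ are $\le0$, which yields the three options.

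The hard part will be the commutativity of the degree $0$ component, and I would treat it carefully: unlike the Bernstein $(1,1)$ filtration, the $(1,-1)$ grading does not annihilate the relation $[y,x]=1$ (here $x,y$ sit in degrees $+1,-1$ and their commutator in degree $0$), so a priori two degree $0$ symbols could fail to commute. What rescues the argument is that, once the normal form is imposed, the only degree $0$ monomials are the $y^ix^i$, so the component collapses to the commutative algebra $F[xy]$; one should in particular check that passing from $A_1$ to $A_1^{(M)}$ introduces no new degree $0$ elements, which holds because $j=Mi$ makes $x^{j/M}=x^i$ an ordinary non-negative power. Everything else is a routine application of Proposition \ref{degree}.
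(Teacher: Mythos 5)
Your proof is correct, but it takes a genuinely different route from the paper's, which disposes of the lemma in two lines by citation: the dichotomy ``$\tilde{a}>0$ or $\tilde{b}>0$'' is quoted from \cite[Remark 1.14]{shape of possible counterexamples version 3}, and $\tilde{a}+\tilde{b}\geq 0$ from \cite[Remark 1.13]{shape of possible counterexamples version 3} via exactly the computation you give, $0=\nu_{1,-1}(1)=\nu_{1,-1}([B,A])\leq\nu_{1,-1}(B)+\nu_{1,-1}(A)$. So your treatment of the inequality essentially coincides with the paper's (you derive subadditivity directly from Proposition \ref{degree} instead of citing it), while your treatment of the dichotomy is new and self-contained: having $\tilde{a}+\tilde{b}\geq 0$ in hand, the only configuration left to exclude is $\tilde{a}=\tilde{b}=0$, and you kill it by observing that the $(1,-1)$ degree-zero component of $A_1^{(M)}$ is the commutative algebra $F[yx]$ (degree zero in the normal form forces $j=Mi\geq 0$, so passing from $A_1$ to $A_1^{(M)}$ creates no new degree-zero elements), whence $l_{1,-1}(BA)=B_lA_l=A_lB_l=l_{1,-1}(AB)$, the top terms of $[B,A]$ cancel, and $\nu_{1,-1}([B,A])<0$ contradicts $[B,A]=1$. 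Your supporting points are also sound: the $(1,-1)$ degree is a genuine $\frac{1}{M}\Z$-grading because every defining relation, including $x^{l/M}y=yx^{l/M}-(l/M)x^{l/M-1}$, is homogeneous (the paper itself notes this in the proof of Proposition \ref{degree}), and multiplicativity of leading terms is precisely the no-cancellation content of that proposition. What each approach buys: the paper's is shorter and leans on the established Guccione--Guccione--Valqui machinery; yours makes the lemma independent of those external remarks and isolates the structural reason behind the dichotomy, namely the commutativity of the Euler-degree-zero part of $A_1^{(M)}$, at the modest cost of the extra verification you carry out.
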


\begin{proof}
{}From \cite[Remark 1.14]{shape of possible counterexamples version 3}, $\tilde{a}> 0$ or $\tilde{b}> 0$.
Therefore, the three options are obvious.

$\tilde{a}+\tilde{b}\geq 0$ follows from \cite[Remark 1.13]{shape of possible counterexamples version 3}:
$0= \nu_{1,-1}(1)= \nu_{1,-1}([B,A]) \leq \nu_{1,-1}(B) + \nu_{1,-1}(A)= \tilde{b} + \tilde{a}$.
\end{proof}

When we say that the monomial $x^iy^j$ ($i \in \Q$, $j \in \N$) appears in $D \in A_1^{(M)}$, 
we mean that there exists $c_{ij} \in F^*$ such that $D= \ldots + c_{ij}x^iy^j + \ldots$
($c_{ij}x^iy^j$ does not necessarily belongs to the $(1,-1)$-leading term of $D$).

\begin{lem}\label{second lemma}
If $\tilde{a} > 0$ and $\tilde{b} \leq 0$, then there are two cases:
\bit
\item [(1)] There exists $0 < u \in \Q$ such that
$x^{u+1}y$ appears in $A$ and $x^{-u}$ appears in $B$.
\item [(2)] There exists $0 > v \in \Q$ such that
$x^{v+1}y$ appears in $B$ and $x^{-v}$ appears in $A$.
\eit

If $\tilde{a} \leq 0$ and $\tilde{b} > 0$, then there are two cases:
\bit
\item [(1)] There exists $0 < u \in \Q$ such that
$x^{u+1}y$ appears in $B$ and $x^{-u}$ appears in $A$.
\item [(2)] There exists $0 > v \in \Q$ such that
$x^{v+1}y$ appears in $A$ and $x^{-v}$ appears in $B$.
\eit

\end{lem}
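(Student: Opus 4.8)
The plan is to pass to the differential-operator model of $A_1^{(M)}$, writing $y=\partial$ and $x=x$, so that the only input is the single relation $[B,A]=1$ obtained by applying the homomorphism $\psi^{(M)}$ to $[y,x]=1$ (recall from Remark~\ref{remark 2 irr onto} that $\psi^{(M)}$ is injective). The structural point I would use throughout is that $yx-xy=1$ is $(1,-1)$-homogeneous of degree $0$, so $A_1^{(M)}$ is $\tfrac1M\Z$-graded by $\nu_{1,-1}$ (this is what makes Proposition~\ref{degree} true); consequently $A$ and $B$ split into $(1,-1)$-homogeneous components, the top ones being $A_l$ and $B_l$ of degrees $\tilde a$ and $\tilde b$. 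Setting $\theta=xy$ and using $x^{j}y^{j}=\theta^{\underline j}=\theta(\theta-1)\cdots(\theta-j+1)$ (a consequence of $\theta x=x(\theta+1)$), every homogeneous element of degree $d$ has the normal form $x^{d}R(\theta)$ with $R\in F[\theta]$. Under this dictionary the coefficient of $x^{d+1}y$ in $x^{d}R(\theta)$ is $R(1)-R(0)$ and the coefficient of $x^{d}$ is $R(0)$, so the two target monomials $x^{u+1}y$ and $x^{-u}$ are exactly the ``$\theta$-linear'' and ``$\theta$-constant'' parts of the relevant homogeneous layers; proving the lemma amounts to showing one of these is nonzero.

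The computational engine is the formula $[x^{p}R(\theta),x^{q}S(\theta)]=x^{p+q}\bigl(R(\theta+q)S(\theta)-S(\theta+p)R(\theta)\bigr)$, which follows from $R(\theta)x^{q}=x^{q}R(\theta+q)$. Since $1$ is homogeneous of degree $0$, I would compare $[B,A]=1$ degree by degree. In the balanced subcase $\tilde a+\tilde b=0$ (so $\tilde b=-\tilde a<0$) the only degree-$0$ contribution is $[B_l,A_l]$, yielding the polynomial identity $Q(\theta+\tilde a)P(\theta)-P(\theta-\tilde a)Q(\theta)=1$, where $A_l=x^{\tilde a}P$ and $B_l=x^{-\tilde a}Q$. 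A one-line comparison of the coefficient of $\theta^{\deg P+\deg Q-1}$ gives $p_{*}q_{*}\,\tilde a\,(\deg P+\deg Q)=0$ for the leading coefficients $p_*,q_*$, and since $\tilde a>0$ this forces $\deg P+\deg Q\le 1$. The surviving possibilities are immediate: $\deg P=\deg Q=0$ is impossible (two $\theta$-constants commute, giving $0\neq1$); $\deg P=1,\ \deg Q=0$ gives $A_l=p_1x^{\tilde a+1}y+p_0x^{\tilde a}$, $B_l=q_0x^{-\tilde a}$ with $p_1q_0\tilde a=1$, which is case~(1) with $u=\tilde a$; and $\deg P=0,\ \deg Q=1$ is case~(2) with $v=-\tilde a$. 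This settles the balanced subcase cleanly.

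When $\tilde a+\tilde b>0$ the top-degree part of $[B,A]=1$ is $[B_l,A_l]$ in the positive degree $\tilde a+\tilde b$, hence $[B_l,A_l]=0$, i.e.\ $Q(\theta+\tilde a)P(\theta)=P(\theta+\tilde b)Q(\theta)$. The same subleading comparison now gives $p_{*}q_{*}(\deg Q\cdot\tilde a-\deg P\cdot\tilde b)=0$; with $\tilde a>0\ge\tilde b$ (Lemma~\ref{first lemma}) this forces $\deg P=\deg Q=0$ when $\tilde b<0$, so $A_l=p_0x^{\tilde a}$ and $B_l=q_0x^{\tilde b}$ are pure powers carrying no $y$, while $\tilde b=0$ forces only $Q$ constant. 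Thus in the unbalanced subcases the $y$ must surface in lower layers, and I would descend on $\nu_{1,-1}$: after subtracting the (commuting) pure-power leading terms I examine the degree-$0$ part of $[B,A]$, namely $\sum_{-\tilde a\le p\le\tilde b}\bigl(Q_p(\theta-p)P_{-p}(\theta)-P_{-p}(\theta+p)Q_p(\theta)\bigr)=1$, where $A=\sum_q x^{q}P_q(\theta)$ and $B=\sum_p x^{p}Q_p(\theta)$, and try to isolate the extremal contributing degree $p$ and show its layer is $\theta$-linear, producing $x^{u+1}y$ with $u>0$ together with the partner pure power $x^{-u}$ (or the symmetric pair for case~(2)).

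The main obstacle is precisely this unbalanced descent: in contrast to the balanced identity, several homogeneous pairs feed the degree-$0$ equation at once, so the clean leading-coefficient argument no longer pins down the degree sum, and one must prove that the extremal contributing layer is linear in $y$ rather than of higher $y$-order. I expect this step to need either an induction on $\nu_{1,-1}$ exploiting $[B,A]=1$ layer by layer, or an appeal to the finer leading-term results of \cite{shape of possible counterexamples version 3}. Finally, the second displayed case of the lemma ($\tilde a\le0<\tilde b$) requires no new work: the pair $(A',B')=(B,-A)$ again satisfies $[B',A']=1$ and has $(\nu_{1,-1}(A'),\nu_{1,-1}(B'))=(\tilde b,\tilde a)$ in the range already treated, and it interchanges exactly the two conclusions, so applying the first case to $(A',B')$ yields the second.
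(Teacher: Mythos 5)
Your algebraic framework is sound, and the balanced subcase is genuinely proved: $A_1^{(M)}$ is indeed $\frac{1}{M}\Z$-graded by $\nu_{1,-1}$, every homogeneous element of degree $d$ has the normal form $x^dR(\theta)$ with $\theta=xy$, your commutator formula $[x^pR(\theta),x^qS(\theta)]=x^{p+q}\bigl(R(\theta+q)S(\theta)-S(\theta+p)R(\theta)\bigr)$ is correct, and so is the dictionary (the coefficient of $x^{d+1}y$ in $x^dR(\theta)$ is $R(1)-R(0)$, that of $x^d$ is $R(0)$). When $\tilde{a}+\tilde{b}=0$ the degree-zero layer of $[B,A]=1$ is exactly $[B_l,A_l]$, your subleading-coefficient identity $p_*q_*\tilde{a}(\deg P+\deg Q)=0$ is correct and forces $\deg P+\deg Q\leq 1$, and the two surviving configurations are precisely cases (1) and (2) of Lemma \ref{second lemma}, with the matched pair of monomials and $u=\tilde{a}$ (resp.\ $v=-\tilde{a}$). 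The swap $(A',B')=(B,-A)$, which preserves $[B',A']=1$ and exchanges the degrees, correctly reduces the second half of the lemma to the first.

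The genuine gap is the unbalanced subcase $\tilde{a}+\tilde{b}>0$ (allowed by Lemma \ref{first lemma}), which you yourself flag as unfinished --- and it is exactly where the content of the lemma lives. There, as you compute, $[B_l,A_l]=0$ forces $A_l$ and $B_l$ to be pure powers (or $B_l$ constant when $\tilde{b}=0$), so the promised monomials must come from strictly lower layers: the paper's own illustrating example $A=cx^8+x^2y$, $B=d+x^{-1}$ has $A_l=cx^8$, $B_l=d$, and the pair $(x^2y,\,x^{-1})$ realizing $u=1$ sits below both leading terms, invisible to any leading/subleading comparison. In the degree-zero identity $\sum_p\bigl(Q_p(\theta-p)P_{-p}(\theta)-P_{-p}(\theta+p)Q_p(\theta)\bigr)=1$ you give no argument excluding that the constant $1$ is produced entirely by layers of higher $y$-order: note that, e.g., $[y^2,x^2]=4\theta+2$ has nonzero constant term, so individual higher-order pairs do contribute constants, and one must actually prove such contributions cannot conspire to yield $1$ without some $\theta$-linear layer paired with a $\theta$-constant one. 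That layer-by-layer engine is precisely what the paper invokes: its (one-line) proof applies the explicit commutator formula of \cite[Lemma 1.7]{shape of possible counterexamples version 3} to $1=[B,A]$ and settles all subcases at once, so completing your descent would essentially amount to reproving that lemma. As written, your proposal establishes Lemma \ref{second lemma} only when $\tilde{a}+\tilde{b}=0$.
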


For example, if $\tilde{a}>0$ and $\tilde{b} \leq 0$:
\bit
\item For $u=3$:
$x^{3+1}y=x^4y$ appears in $A$ and $x^{-3}$ appears in $B$.

\item For $v= -3$:
$x^{-3+1}y= x^{-2}y$ appears in $B$ and $x^{-(-3)}= x^3$ appears in $A$.

\item For $v= -1$:
$x^{-1+1}y= y$ appears in $B$ and $x^{-(-1)}= x$ appears in $B$.
\eit

\begin{proof}
Each claim follows from $1= [\psi^{(M)}(y), \psi^{(M)}(x)]$ and \cite{shape of possible counterexamples version 3}[Lemma 1.7].
\end{proof}

If, for example, $\tilde{a} > 0$ and $\tilde{b} \leq 0$, it is not true that there exists $\N \ni l \geq 2$
such that $x^l$ appears in $A$ and $y^l$ appears in $B$; only for $l=1$ it may (or may not) happen that
$x$ appears in $A$ and $y$ appears in $B$.

Indeed, if $\N \ni l \geq 2$ is such that $x^l$ appears in $A$ and $y^l$ appears in $B$,
take $m \geq 2$ be maximal with that property.

Clearly, for any $\N \ni k \geq 1$, we have 
$[y^k,x^k]=c_{k-1}x^{k-1}y^{k-1}+c_{k-2}x^{k-2}y^{k-2}+\ldots+c_2x^2y^2+c_1xy+c_0$,
where $c_i \in F^*$ fol all $0 \leq i \leq k-1$ (the $c_i$'s are easily obtained from the formula in \cite{shape of possible counterexamples version 3}[Lemma 1.7]).

But then $x^{m-1}y^{m-1}$ appears in $[B,A]$, a contradiction to $[B,A]=1$.

Now we bring our attempt to prove that each irreducible endomorphism is onto.

We wish to emphasyze that if our proof of Theorem \ref{irr onto} will be found to be true, then it is obvious that without the results of
Joseph \cite[Corollary 5.5]{joseph}, J.A. Guccione, J.J. Guccione and Valqui \cite[Theorem 5.11]{shape of possible counterexamples}, it seems (at least to me) an impossible mission to prove Dixmier's conjecture.

We guess Dixmier's results \cite{dixmier} and probably other results of additional researchers, should also be considered as necessary ingredients in our proof, since they inspired \cite{joseph} and 
\cite{shape of possible counterexamples}.

So, if our proof is true, then at least six people are responsible for it. If our proof is false, then only one person is to blame.

\begin{thm}\label{irr onto}
Each irreducible endomorphism of $A_1$ is onto.
\end{thm}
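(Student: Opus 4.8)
The plan is to argue by contradiction. Suppose $f$ is an irreducible endomorphism of $A_1$ that is not onto; since $A_1$ is simple, $f$ is injective, so $f$ is not an automorphism. The first move is to normalize the shape of $f$. Because $(f(x),f(y))$ is an irreducible pair, I would invoke \cite[Theorem 5.11]{shape of possible counterexamples} to obtain $g \in \Aut_F(A_1)$ with $l_{1,1}(g(f(x)))= \mu_P x^{am}y^{bm}$ and $l_{1,1}(g(f(y)))= \mu_Q x^{an}y^{bn}$, where $m,n>1$, $\gcd(m,n)=1$, $1 \le a < b$, and with $(g(f(x)),g(f(y)))$ again irreducible. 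Replacing $f$ by $gf$ (still irreducible, and still not onto since $g$ is an automorphism), I may assume from the outset that $f$ itself has these monomial $(1,1)$ leading terms, and Remark \ref{remark 1 irr onto} then supplies a monomial $x^{\tilde i}y^{\tilde j}$ in $f(x)$ with $\tilde j < \tilde i \le am$ and a monomial $x^{i}y^{j}$ in $f(y)$ with $j < i \le an$.

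Next, since $f$ is not an automorphism, I would apply Joseph's \cite[Corollary 5.5]{joseph} to produce a positive integer $M$ and $\psi^{(M)}\in H^{(M)}$ with $l_{1,-1}(\psi^{(M)}(f(x)))= -(M/\beta l)x^{-l/M}$ and $l_{1,-1}(\psi^{(M)}(f(y)))= \beta y x^{1+l/M}$ for some $\Z \ni l \neq 0$ and $F \ni \beta \neq 0$. I would take the homomorphism of the closing notations to be this composition: $A=\psi^{(M)}(f(x))$ and $B=\psi^{(M)}(f(y))$, so that $A_l= -(M/\beta l)x^{-l/M}$ is a pure power of $x$ (its $y$-degree is $0$) while $B_l=\beta y x^{1+l/M}$ has $y$-degree exactly $1$. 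Crucially, the remarks following the quotation of Joseph's result give $\tilde a=\nu_{1,-1}(A)= -l/M$ and $\tilde b=\nu_{1,-1}(B)= l/M$, hence $\tilde a+\tilde b=0$.

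With $\tilde a+\tilde b=0$, the third option of Lemma \ref{first lemma} ($\tilde a>0$ and $\tilde b>0$) is immediately excluded, and since $\tilde a>0$ or $\tilde b>0$ we are left with exactly option $1$ ($\tilde a>0$, $\tilde b<0$) or option $2$ ($\tilde a<0$, $\tilde b>0$); these differ only by the sign of $l$ (equivalently of $\tilde a$) and are handled by parallel arguments, so I would carry out option $1$. Here $A_l=cx^{\tilde a}$ with $\tilde a>0$ and $B_l=\beta y x^{1-\tilde a}$, and Lemma \ref{second lemma} forces one of its two monomial configurations inside $A$ and $B$. The heart of the argument is then to confront these $(1,-1)$-data against the $(1,1)$-data of the first paragraph. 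Writing $A=f(x)(\xi,\eta)$ and $B=f(y)(\xi,\eta)$ with $\xi=\psi^{(M)}(x)$, $\eta=\psi^{(M)}(y)$, I would compute $\nu_{1,-1}(A)$ and $\nu_{1,-1}(B)$ via Proposition \ref{degree} by tracking which vertex of the Newton polygon of $f(x)$ (respectively $f(y)$) is selected by the direction $(\nu_{1,-1}(\xi),\nu_{1,-1}(\eta))$. The corners $x^{am}y^{bm}$, $x^{an}y^{bn}$, the subrectangularity, and the extra monomials of Remark \ref{remark 1 irr onto} pin down this polygon, and the demand that the selected leading term of $A$ be a \emph{pure power of $x$} while that of $B$ have $y$-degree exactly $1$ should be incompatible with $a<b$ and $\gcd(m,n)=1$, producing the contradiction.

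The main obstacle I anticipate is precisely this last matching step: controlling the possible cancellations of top $(1,-1)$-terms when $f(x),f(y)$ are substituted into $\psi^{(M)}$, and proving rigorously that no support obeying the shape constraints (the monomial $(1,1)$ corners with $a<b$, subrectangularity, and the Remark \ref{remark 1 irr onto} monomials) can yield the pure-$x$-power leading term of $A$ together with the $y$-degree-one leading term of $B$ demanded by Joseph's Corollary 5.5. Establishing this incompatibility is delicate exactly in the borderline configuration of Lemma \ref{second lemma} where the exponent already matches $B_l$; there the na\"ive degree count is tight, so a finer analysis of the non-leading terms (using $[B,A]=1$ and the explicit commutator expansions behind Lemma \ref{second lemma}) will be required. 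The degree bookkeeping of the first two paragraphs is routine once the reductions are in place; all the genuine difficulty is concentrated in this combinatorial matching.
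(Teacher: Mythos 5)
You have assembled the same two external pillars as the paper (normalizing the pair via \cite[Theorem 5.11]{shape of possible counterexamples} so that $l_{1,1}((gf)(x))=\mu_P x^{am}y^{bm}$, $l_{1,1}((gf)(y))=\mu_Q x^{an}y^{bn}$, then confronting Joseph's \cite[Corollary 5.5]{joseph}), but your argument stops exactly where the theorem begins. The entire content of the statement is that Joseph's configuration --- $l_{1,-1}(\psi^{(M)}((gf)(x)))$ a pure power of $x$ and $l_{1,-1}(\psi^{(M)}((gf)(y)))$ of $y$-degree one --- is incompatible with the subrectangular shape and the corner monomials of Remark \ref{remark 1 irr onto}; your third and fourth paragraphs assert this ``should be incompatible'' and defer the cancellation analysis as an anticipated obstacle. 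That is a plan, not a proof: the decisive step is named but never executed, so the proposal has a genuine gap precisely at the point where all the difficulty lives.

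Moreover, the intermediate machinery you do deploy does no work, because you repurpose the notations against their intended targets. You set $A=\psi^{(M)}(f(x))$, $B=\psi^{(M)}(f(y))$, whereas in the paper's closing notations $A=\psi^{(M)}(x)$, $B=\psi^{(M)}(y)$ are the images of the \emph{generators}. With your choice, $\tilde a=-l/M$ and $\tilde b=l/M$ are already dictated by Joseph, so Lemma \ref{first lemma} tells you nothing new, and Lemma \ref{second lemma} applied to your $A,B$ merely reproduces Joseph's prescribed leading monomials (take $u=l/M$: $x^{u+1}y$ appears in $B$ and $x^{-u}$ in $A$) --- no fresh constraint is extracted. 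The paper instead case-splits on the signs of $\tilde a=\nu_{1,-1}(\psi^{(M)}(x))$ and $\tilde b=\nu_{1,-1}(\psi^{(M)}(y))$, which are \emph{not} determined by Joseph's conclusion, and in each of the three options of Lemma \ref{first lemma} argues, using the monomials $x^{\tilde i}y^{\tilde j}$ with $\tilde j<\tilde i$ from Remark \ref{remark 1 irr onto} together with Lemma \ref{second lemma}, that $\nu_{1,-1}(\psi^{(M)}((gf)(x)))>0$ \emph{and} $\nu_{1,-1}(\psi^{(M)}((gf)(y)))>0$ simultaneously, contradicting the opposite signs $-l/M$, $+l/M$ forced by Joseph. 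That both-positive sign argument is the punchline your sketch lacks; your proposed Newton-polygon matching, if carried out, would have to reprove exactly the no-cancellation claims on which it rests. (To be fair, those claims are the same point the paper itself treats only by assertion and flags as tentative, so you have correctly located the hard kernel --- but unlike the paper you have not even reduced the theorem to it by the sign analysis.)
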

 
\begin{proof}
Let $f$ be an irreducible endomorphism of $A_1$. 

Let $P= f(x)$ and $Q= f(y)$. Then, by definition, $(P,Q)$ is an irreducible pair. 
Hence, from \cite[Theorem 5.11]{shape of possible counterexamples} there exist $\mu_P, \mu_Q \in F^*$, 
$a,b,m,n \in \N$ and $g \in \Aut_F(A_1(F))$, such that $m,n > 1$, $\gcd(m,n)= 1$, $1 \leq a < b$ and
$l_{1,1}(g(P))= \mu_P x^{am}y^{bm}$, 
$l_{1,1}(g(Q))= \mu_Q x^{an}y^{bn}$, 
$\nu_{1,1}(g(P))= \nu_{1,1}(P)$,
$\nu_{1,1}(g(Q))= \nu_{1,1}(Q)$,
$g(P)$ and $g(Q)$ are subrectangular and the pair $(g(P),g(Q))$ is irreducible.

So, $\mu_P x^{am}y^{bm}= l_{1,1}(g(P))= l_{1,1}(g(f(x)))= l_{1,1}((gf)(x))$ and 

$\mu_Q x^{an}y^{bn}= l_{1,1}(g(Q))= l_{1,1}(g(f(y)))= l_{1,1}((gf)(y))$.

\textbf{Claim}: $gf$ is an automorphism.

\textbf{Proof of claim}: From Joseph's result \cite[Corollary 5.5]{joseph}, 
it is enough to show that there exist no positive integer $M$ and a map $\psi^{(M)} \in H^{(M)}$
such that for some $\Z \ni l \neq 0, F \ni \beta \neq 0$, 

$l_{1, -1}(\psi^{(M)}((gf)(x))) = -(M/ \beta l) x^{- l/M}$ 

and $l_{1, -1}(\psi^{(M)}((gf)(y))) = \beta x^{1+ l/M} y$. 
Otherwise, let $M$ be a positive integer and $\psi^{(M)} \in H^{(M)}$ a map

such that for some $\Z \ni l \neq 0, F \ni \beta \neq 0$, 

$l_{1, -1}(\psi^{(M)}((gf)(x))) = -(M/ \beta l) x^{- l/M}$ 

and $l_{1, -1}(\psi^{(M)}((gf)(y))) = \beta x^{1+ l/M} y$.

Of course, $\nu_{1, -1}(\psi^{(M)}((gf)(x)))= - \nu_{1, -1}(\psi^{(M)}((gf)(y)))$; namely one $(1,-1)$-degree is positive and the other $(1,-1)$-degree is negative.

We have seen in Lemma \ref{first lemma} that there are three options; in each option we will show that it is impossible to have $\nu_{1, -1}(\psi^{(M)}((gf)(x)))= - \nu_{1, -1}(\psi^{(M)}((gf)(y)))$.

In other words, in each option we will show that it is impossible to have one of 
$\nu_{1, -1}(\psi^{(M)}((gf)(x))), \nu_{1, -1}(\psi^{(M)}((gf)(y)))$ positive and the other negative.

Therefore, Joseph's result \cite[Corollary 5.5]{joseph} would imply that $gf$ is onto.

\textbf{First option} $\tilde{a} > 0$ and $\tilde{b} \leq 0$:
(For example, $A= cx^8+x^2y$ and $B=d+x^{-1}$, where $c,d \in F$. Indeed,
$[B,A]=[d+x^{-1},cx^8+x^2y]=$
$[d,cx^8]+[d,x^2y]+[x^{-1},cx^8]+[x^{-1},x^2y]=$
$0+0+0+[x^{-1},x^2y]=$
$x^{-1}x^2y-x^2yx^{-1}=$
$xy-x^2(x^{-1}y-x^{-2})=$
$xy-xy+1= 1$.
Another example: $A=x$ and $B=x^{-1}+y$).

{}From Lemma \ref{second lemma} we get that there are two cases:
\bit
\item [(1)] There exists $0 < u \in \Q$ such that
$x^{u+1}y$ appears in $A$ and $x^{-u}$ appears in $B$:

Remark \ref{remark 1 irr onto} says that there exist $\tilde{i},\tilde{j} \in \N$ with $0 \leq \tilde{j} < \tilde{i} \leq am$ such that $x^{\tilde{i}}y^{\tilde{j}}$ appears in $(gf)(x)$,
and there exist $i,j \in \N$ with $0 \leq j < i \leq an$ such that $x^{i}y^{j}$ appears in $(gf)(y)$.

Therefore, $A^{\tilde{i}}B^{\tilde{j}}$ appears in $\psi^{(M)}((gf)(x))$
and $A^{i}B^{j}$ appears in $\psi^{(M)}((gf)(y))$.

Hence, $(x^{u+1}y)^{\tilde{i}}(x^{-u})^{\tilde{j}}$ appears in $\psi^{(M)}((gf)(x))$
and $(x^{u+1}y)^{i}(x^{-u})^{j}$ appears in $\psi^{(M)}((gf)(y))$.

Of course,
$\nu_{1,-1}((x^{u+1}y)^{\tilde{i}}(x^{-u})^{\tilde{j}})=$
$u(\tilde{i}-\tilde{j}) > 0$
(so there is a chance that $\nu_{1,-1}(\psi^{(M)}((gf)(x))) > 0$)

and $\nu_{1,-1}((x^{u+1}y)^{i}(x^{-u})^{j})=$
$u(i-j) > 0$

(so there is a chance that $\nu_{1,-1}(\psi^{(M)}((gf)(y))) > 0$).

Now, for $r,s,r',s',R,S,R',S' \in \N$ notice the following two facts; the first is about $r > s$, while the second is about $r\leq s$:

When $r > s$: If $x^ry^s$ appears in $(gf)(x)$,
then $A^{r}B^{s}$ appears in $\psi^{(M)}((gf)(x))$.
$A^{r}B^{s}=$
$(\ldots+x^{u+1}y+\ldots)^{r}(\ldots+x^{-u}+\ldots)^{s}=$
$(\ldots+x^{r(u+1)}y^{r}+\ldots)(\ldots+x^{s(-u)}+\ldots)=$
$\ldots+ x^{r(u+1)}y^{r}x^{s(-u)} +\ldots=$
$\ldots+ x^{r(u+1)-su}y^{r} +\ldots$,
so $x^{r(u+1)-su}y^{r}$ apppears in $A^{r}B^{s}$
($\nu_{1,-1}(x^{r(u+1)-su}y^{r})=r(u+1)-su-r=ru-su=u(r-s) > 0$).

What is important is that for different pairs $(r,s)$ ($r > s$) and $(r',s')$ ($r' > s'$) 

such that $x^ry^s$ appears in $(gf)(x)$, $x^{r'}y^{s'}$ appears in $(gf)(x)$ and $r-s=r'-s'$, we have: 

$x^{r(u+1)-su}y^{r}$ apppears in $A^{r}B^{s}$
(of course, $A^{r}B^{s}$ appears in $\psi^{(M)}((gf)(x))$)

and $x^{r'(u+1)-s'u}y^{r'}$ apppears in $A^{r'}B^{s'}$ 

(of course, $A^{r'}B^{s'}$ appears in $\psi^{(M)}((gf)(x))$),

which are different monomials (although they have the same $(1,-1)$-degree, namely $u(r-s)=u(r'-s')$).

Exactly the same considerations show that if $x^Ry^S$ appears in $(gf)(y)$,
then $A^{R}B^{S}$ appears in $\psi^{(M)}((gf)(y))$.
$A^{R}B^{S}= \dots =$
$\ldots+ x^{R(u+1)-Su}y^{R} +\ldots$,
so $x^{R(u+1)-Su}y^{R}$ apppears in $A^{R}B^{S}$
($\nu_{1,-1}(x^{R(u+1)-Su}y^{R})=R(u+1)-Su-R=Ru-Su=u(R-S) > 0$).

And for different pairs $(R,S)$ ($R > S$) and $(R',S')$ ($R' > S'$) such that $x^Ry^S$ appears in $(gf)(y)$, $x^{R'}y^{S'}$ appears in $(gf)(y)$ and $R-S=R'-S'$,

we have: $x^{R(u+1)-Su}y^{R}$ apppears in $A^{R}B^{S}$ 

(of course, $A^{R}B^{S}$ appears in $\psi^{(M)}((gf)(y))$)

and $x^{R'(u+1)-S'u}y^{R'}$ apppears in $A^{R'}B^{S'}$

(of course, $A^{R'}B^{S'}$ appears in $\psi^{(M)}((gf)(y))$),

which are different monomials (although they have the same $(1,-1)$-degree, namely $u(R-S)=u(R'-S')$).

When $r \leq s$: If $x^ry^s$ appears in $(gf)(x)$,
then $A^{r}B^{s}$ appears in $\psi^{(M)}((gf)(x))$,
and similarly, if $x^Ry^S$ appears in $(gf)(y)$,
then $A^{R}B^{S}$ appears in $\psi^{(M)}((gf)(y))$.

Generally, $A^{r}B^{s}$ and $A^{R}B^{S}$ may or may not contain monomials of positive $(1,-1)$-degree.

Now, in Lemma \ref{first lemma} we have seen that $\tilde{a}+\tilde{b}\geq 0$. 

If $\tilde{a}+\tilde{b}=0$, then it is clear that for any $r,s \in \N$

$\nu_{1,-1}(A^{r}B^{s})= $
$\nu_{1,-1}((A_l)^{r}(B_l)^{s})=$
$r\tilde{a}+s\tilde{b}=$
$r\tilde{a}+s(-\tilde{a})=$
$(r-s)\tilde{a}$.

Therefore, when $r \leq s$, we have 
$\nu_{1,-1}(A^{r}B^{s})= (r-s)\tilde{a} \leq 0$,
and when $r > s$, we have
$\nu_{1,-1}(A^{r}B^{s})= (r-s)\tilde{a} > 0$.

Hence the different (positive $(1,-1)$-degree) monomials obtained above, namely, in the first fact $r > s$, 
will certainly not be cancelled by (non-positive $(1,-1)$-degree) monomials obtained from $r \leq s$, 
so

$\nu_{1,-1}(\psi^{(M)}((gf)(x))) > 0$ and 
$\nu_{1,-1}(\psi^{(M)}((gf)(y))) > 0$.

If $\tilde{a}+\tilde{b} > 0$, the situation is a little more complicated;
however, here also one can obtain 
$\nu_{1,-1}(\psi^{(M)}((gf)(x))) > 0$ and 
$\nu_{1,-1}(\psi^{(M)}((gf)(y))) > 0$. 

Indeed, although $A^{r}B^{s}$ (and $A^{R}B^{S}$) where $r \leq s$ (and $R \leq S$), 
may contain monomials of positive $(1,-1)$-degree,
so apriori such monomials of positive $(1,-1)$-degree may cancel monomials of positive $(1,-1)$-degree that are obtained from $A^{\hat{r}}B^{\hat{s}}$ (and from $A^{\hat{R}}B^{\hat{S}}$), 
where $\hat{r} > \hat{s}$ and $\hat{R} > \hat{S}$.

However, since we consider the case in which $x^{u+1}y$ appears in $A$ and $x^{-u}$ appears in $B$ ($0 < u \in \Q$),

one can see that monomials of positive $(1,-1)$-degree appearing in $A^{r}B^{s}$ (and in $A^{R}B^{S}$) 
where $r \leq s$ (and $R \leq S$), must differ from monomials of positive $(1,-1)$-degree appearing in $A^{\hat{r}}B^{\hat{s}}$ (and in $A^{\hat{R}}B^{\hat{S}}$) where $\hat{r} > \hat{s}$ (and $\hat{R} > \hat{S}$).

\item [(2)] There exists $0 > v \in \Q$ such that
$x^{v+1}y$ appears in $B$ and $x^{-v}$ appears in $A$:

Similarly to the above case (namely, the case in which there exists $0 < u \in \Q$ such that
$x^{u+1}y$ appears in $A$ and $x^{-u}$ appears in $B$),
one can obtain 
$\nu_{1,-1}(\psi^{(M)}((gf)(x))) > 0$ and 
$\nu_{1,-1}(\psi^{(M)}((gf)(y))) > 0$.
\eit

\textbf{Second option} $\tilde{a} \leq 0$ and $\tilde{b} > 0$:
Notice that this option is not symmetric to the first option,
since $am < bm$ and $an < bn$.

It is clear that, if we write $bm= am+t$ and $bn= an+t'$ with $t,t'>0$, then 
$\nu_{1,-1}(A^{am}B^{bm})=$
$(am)\tilde{a}+(bm)\tilde{b}=$
$(am)\tilde{a}+(am+t)\tilde{b}=$
$(am)(\tilde{a}+\tilde{b})+t\tilde{b} > 0$.
And similarly, 
$\nu_{1,-1}(A^{an}B^{bn})=$
$(an)(\tilde{a}+\tilde{b})+t'\tilde{b} > 0$.

As in the first option, from Lemma \ref{second lemma} we get that there are two cases:
\bit
\item [(1)] There exists $0 < u \in \Q$ such that
$x^{u+1}y$ appears in $B$ and $x^{-u}$ appears in $A$.
\item [(2)] There exists $0 > v \in \Q$ such that
$x^{v+1}y$ appears in $A$ and $x^{-v}$ appears in $B$.
\eit

(Observe that now, when $r \leq s$ we get that
$A^{r}B^{s}$ is of non-negative $(1,-1)$-degree,
while in the above First option, when $r \leq s$ we get that
$A^{r}B^{s}$ is of non-positive $(1,-1)$-degree).

One can see that, in each of those two cases, monomials of positive $(1,-1)$-degree appearing in $A^{am}B^{bm}$ 
(and in $A^{an}B^{bn}$) must differ from monomials of positive $(1,-1)$-degree appearing in 
$A^{r}B^{s}$ (and in $A^{R}B^{S}$) where: 

$r \leq s$ (and $R \leq S$) and $A^{r}B^{s}$ appears in $\psi^{(M)}((gf)(x))$
(and $A^{R}B^{S}$ appears in $\psi^{(M)}((gf)(y))$).

So,
$\nu_{1,-1}(\psi^{(M)}((gf)(x))) > 0$ and 
$\nu_{1,-1}(\psi^{(M)}((gf)(y))) > 0$.

\textbf{Third option} $\tilde{a} > 0$ and $\tilde{b} > 0$:
It is clear that
$l_{1, -1}(\psi^{(M)}((gf)(x))) = (A_l)^{am}(B_l)^{bm}$

and $l_{1, -1}(\psi^{(M)}((gf)(y))) = (A_l)^{an}(B_l)^{bn}$.

So, $\nu_{1, -1}(\psi^{(M)}((gf)(x)))= (am)\tilde{a} + (bm)\tilde{b} > 0$

and $\nu_{1, -1}(\psi^{(M)}((gf)(y)))= (an)\tilde{a} + (bn)\tilde{b} > 0$.
Finally, if our claim that $gf$ is an automorphism is indeed true, then since $g$ is an automorphism, we get that $f$ is an automorphism, because $f= 1f= (g^{-1}g)f= g^{-1}(gf)$ is a product of two automorphisms: $g^{-1}$ and $gf$.
\end{proof}

\section{Related topics}
We suggest to consider the following topics. In those topics we (usually) took the exchange involution $\alpha$, although one may take other involutions as well.

\subsection{Prime characteristic case}

When $F$ is of prime characteristic, $A_1(F)$ is not simple. Bavula asked the following question: Is every algebra endomorphism of the first Weyl algebra $A_1(F)$, $\Char(F)= p > 0$, a monomorphism? Makar-Limanov \cite{makar BC} gave a positive answer to that question. However, according to \cite{normal subgroup}, the following endomorphism $f$ 
($f$ is necessarily a monomorphism) is not onto, since $x$ is not in the image of $f$:

$f: A_1 \longrightarrow A_1$ such that $f(x)= x+x^p$ and $f(y)= y$. 

Since this $f$ is not an $\alpha$-endomorphism, one may wish to ask the following question: 

\begin{question} 
Is every $\alpha$-endomorphism of $A_1(F)$, $\Char(F)= p > 0$, onto? 
\end{question}

More generally, Makar-Limanov gave a negative answer to the following question:
Is every algebra endomorphism of the $n$'th Weyl algebra $A_n(F)$, $n\geq 2$, $\Char(F)= p > 0$, a monomorphism?
Namely, there exists an algebra endomorphism of the $n$'th Weyl algebra $A_n(F)$, $n \geq 2$, $\Char(F)= p > 0$, which is not a monomorphism.

One may wish do define an involution $\alpha$ on $A_n(F)$ ($n\geq 2$, $\Char(F)= p > 0$), an $\alpha$-endomorphism of $A_n(F)$ and generalizations of $\nu_{1,1}$ and of $\nu_{1,-1}$.
Then one may check carefully Makar-Limanov's results and see if his results are applicable to $\alpha$-endomorphisms of $A_n(F)$ or not.
If not, then one may wish to ask the following question: 
\begin{question}
Is every $\alpha$-endomorphism of $A_n(F)$, $n\geq 2$, $\Char(F)= p > 0$, a monomorphism?
\end{question}

\subsection{Weyl algebras}
Let $\Char(F)= 0$. In the second section we have tried to answer the ``starred Dixmier's question", namely: Is every $\alpha$-endomorphism of $A_1(F)$, $\Char(F)= 0$, an automorphism?

More generally, one may wish to define an involution $\alpha$ on $A_n(F)$ ($\geq 2$), an $\alpha$-endomorphism of $A_n(F)$ and degrees.

Then, of course, one may ask the following question:

\begin{question}
Is every $\alpha$-endomorphism of $A_n(F)$, $n \geq 2$, $\Char(F)= 0$, an automorphism?
\end{question}

Call ``the $n$'th starred Dixmier's conjecture" (or the ``$n$'th $\alpha$-Dixmier's conjecture") the conjecture that every $\alpha$-endomorphism of $A_n(F)$ ($\Char(F)= 0$ and $n \geq 1$) is an automorphism. For $n=1$ this is just our starred Dixmier's conjecture (or our $\alpha$-Dixmier's conjecture). Denote it by $\alpha-D_n$.

\subsection{Connection to the Jacobian conjecture}
Let $\Char(F)=0$.
There is an interesting connection between Dixmier's problem $1$ and the Jacobian conjecture.
For a detailed background on the Jacobian conjecture, see, for example, \cite{bass connell wright} or \cite{jacobian van den essen}.

Let $\gamma \in \End_F(F[x_1,\ldots,x_n])$. If $\gamma(x_1)=g_1,\ldots,\gamma(x_n)=g_n$, then we write $\gamma=(g_1,\ldots,g_n)$. 
By definition, the Jacobian matrix of $\gamma$ is $J(\gamma)=(\partial g_i / \partial x_j)$, 
$1 \leq i,j \leq n$.

The Jacobian conjecture-$n$, denoted by $JC_n$ says the following: 

If $\gamma \in \End_F(F[x_1,\ldots,x_n])$ 
with $\det(J(\gamma)) \in F^*$ ($F^*= F-{0})$, 

then $\gamma \in \Aut_F(F[x_1,\ldots,x_n])$ (namely, $\gamma$ is invertible).

The inverse implication is well known, namely: Given $\gamma \in \Aut_F(F[x_1,\ldots,x_n])$, 
then $\det(J(\gamma)) \in F^*$ (see, for example, \cite[page 355]{cohn} or \cite[Appendix 6A]{rowen}).

Notice that when $n=1$: If $\End_F(F[x_1]) \ni \gamma= g_1$ ($g_1 \in F[X_1]$) with $\det(J(\gamma)) \in F^*$, 
then $g_1= aX_1+b$ ($F \in a \neq 0$, $F \ni b$), which is obviously invertible - 
its inverse is $\delta= (1/a)X_1-(1/a)b$.

The connection between Dixmier's problem $1$ and the Jacobian conjecture is as follows:
\begin{itemize}
\item [(1)] The $n$'th Dixmier's conjecture, $D_n$ 

(=Every endomorphism of $A_n(F)$, $\Char(F)= 0$, is an automorphism), 

implies $JC_n$, the Jacobian conjecture-$n$, see \cite[Proposition 3.28]{jacobian van den essen}.

\item [(2)] $JC_{2n}$ $\Rightarrow$ $D_n$: 

Interestingly, the Jacobian conjecture-$2n$ implies the $n$'th Dixmier's conjecture. 
This was proved independently by Tsuchimoto \cite{tsuchimoto} and by Belov and Kontsevich \cite{belov}. 
A shorter proof can be found in \cite{bavula jacobian}.
\end{itemize}

One may ask what happens in the starred case, namely: 
\begin{question}
What should be the $\alpha$-Jacobian conjecture-$n$, $\alpha-JC_n$, such that the following connections hold (maybe there exists no such $\alpha-JC_n$?): \bit

\item [(1)] The $n$'th $\alpha$-Dixmier's conjecture, $\alpha-D_n$ 

(= Every $\alpha$-endomorphism of $A_n(F)$, $\Char(F)= 0$, is an automorphism), 

implies $\alpha-JC_n$.

\item [(2)] $\alpha-JC_{2n}$ $\Rightarrow$ $\alpha-D_n$.
\eit
\end{question}

One may also try to find a connection between the $\alpha$-Dixmier's conjecture and the Poisson conjecture. For details on the Poisson conjecture see, for example, \cite{adja}.

\subsection{The group of automorphisms}
It would be interesting to describe the group of $\alpha$-automorphisms of $A_1(F)$ 

(of course, it is a subgroup of the group of automorphisms of $A_1(F)$) for both $\Char(F)= 0$ and $\Char(F)= p > 0$.

Recall the following results concerning the group of automorphisms of $A_1$; each may be studied in the context of $\alpha$-automorphisms: \bit

\item Dixmier's result (see \cite[Theorem 8.10]{dixmier}) that the group of automorphisms of $A_1(F)$ ($\Char(F)= 0$) is generated by 
$f_{n,\lambda}$ and $f'_{n,\lambda}$,
where $f_{n,\lambda}(x)= x+ \lambda y^n$, $f_{n,\lambda}(y)= y$,
$f'_{n,\lambda}(x)= x$, $f'_{n,\lambda}(y)= y+ \lambda x^n$ ($n \in \N$, $\lambda \in F$). 

It would be nice if the group of $\alpha$-automorphisms of $A_1(F)$ ($\Char(F)= 0$) is generated, as a group with composition as the binary operation, by the family of $\alpha$-automorphisms of Proposition \ref{family of alpha auto}. We do not know yet if this is indeed true (it seems too restrictive to have that family as generators). 

\item Alev's result that the group of automorphisms of $A_1$ is amalgamated, see \cite{alev}.
The group of automorphisms of the polynomial ring $F[x,y]$ ($F$ is a field) is an amalgamated group, see \cite[Theorem 3]{warren}.

\item Makar-Limanov's new proof \cite{makar} of a theorem already brought by Dixmier in \cite{dixmier}, which shows that the group of automorphisms of $A_1$ is isomorphic to a particular subgroup of the group of automorphisms of $F[X,Y]$, the commutative polynomial algebra in two variables.
\eit

\subsection{Same questions for other algebras}
One may wish to ask similar questions for other algebras, see, for example, \cite{bavula analogue}.
In algebras where an involution can be defined, one may wish to see if the presence of an involution may be of any help in solving such questions.

\bibliographystyle{plain}

\begin{thebibliography}{00}

\bibitem{adja} K. Adjamagbo, A. van den Essen, On the equivalence of the Jacobian, Dixmier and Poisson conjectures in any characteristic, \textit{arXiv:math/0608009v1 [math.AG] 1 Aug 2006}.

\bibitem{alev} J. Alev, Un automorphisme non modere de $U(g_3)$, \textit{Commun. Algebra 14, no. 8, 1365-1378, 1986}.

\bibitem{bass connell wright} H. Bass, E. Connell and D. Wright, The Jacobian conjecture: reduction of degree and formal expansion of the inverse, \textit{Bull. Amer. Math. Soc. (New Series), 7, 287-330, 1982}.

\bibitem{bavula jacobian} V.V. Bavula, The Jacobian conjecture-$2n$ implies the Dixmier problem-$n$, 

\textit{arXiv:math/0512250v1 [math.RA] 12 Dec 2005}.

\bibitem{bavula analogue} V.V Bavula, 
An analogue of the conjecture of Dixmier is true for the algebra of polynomial integro-differential operators, \textit{arXiv:1011.3009v1 [math.RA] 12 Nov 2010}.

\bibitem{belov} A. Belov-Kanel, M. Kontsevich, The Jacobian conjecture is stably equivalent to the Dixmier conjecture, \textit{Mosc. Math. J. 7, no. 2, 209-218, 349, 2007}.

\bibitem{cohn} P.M. Cohn, \textit{Free rings and their relations}, second edition, London Mathematical Society, No. 19, Academic Press, 1985.

\bibitem{warren} W. Dicks, Automorphisms of the polynomial ring in two variables, \textit{Publ. Sec. Mat. Univ. Aut. de Barcelona, vol. 27, 155-162, 1983}.

\bibitem{dirac} P.A.M. Dirac, On quantum algebra, \textit{Proc. Cambridge Phil. Soc., 23, 412-418, 1926}.

\bibitem{dixmier} J. Dixmier, Sur les algebres de Weyl, \textit{Bull. Soc. Math. France, vol. 96, 209-242, 1968}.

\bibitem{cent} J.A. Guccione, J.J. Guccione, and C. Valqui, On the centralizers in the Weyl algebra,
\textit{arXiv:0912.5202v1 [math.RA] 28 Dec 2009}.

\bibitem{shape of possible counterexamples} J.A. Guccione, J.J. Guccione, and C. Valqui, The Dixmier conjecture and the shape of possible counterexamples, \textit{arXiv:1111.6100v2 [math.RA] 29 may 2012}.

\bibitem{shape of possible counterexamples version 3} J.A. Guccione, J.J. Guccione, and C. Valqui, The Dixmier conjecture and the shape of possible counterexamples, \textit{arXiv:1111.6100v3 [math.RA] 30 Oct 2013}.


\bibitem{joseph} A. Joseph, The Weyl algebra- semisimple and nilpotent elements, \textit{Amer. J. Math. 97, no.3, 597-615, 1975}.

\bibitem{normal subgroup} M.K. Kouakou, A. Assidjo, On normal subgroups of $Aut_k(A_1(k))$, the $k$-automorphisms group of Weyl algebra $A_1(k)$, \textit{Afr. Mat. 22, 57-64, 2011}.

\bibitem{makar} L. Makar-Limanov, On automorphisms of Weyl algebra, \textit{Soc. Math. France, vol. 112, 359-363, 1984}.

\bibitem{makar BC} L. Makar-Limanov, A conjecture of Bavula on homomorphisms of the Weyl algebra, 

\textit{arXiv:1004.3028v1 [math.RA] 18 Apr 2010}.

\bibitem{rowen} L.H. Rowen, \textit{Graduate Algebra: Commutative View}, Graduate Studies in Mathematics, Volume 73, American Mathematical Society, 2006.

\bibitem{tsuchimoto} Y. Tsuchimoto, Endomorphisms of Weyl algebra and $p$-curvature, \textit{Osaka J. Math. 42, no. 2, 435-452, 2005}.

\bibitem{jacobian van den essen} A. Van den Essen, polynomial automorphisms and the Jacobian conjecture, \textit{Progress in Mathematics, 190. Birkhuser Verlag, Basel, 2000}.

\end{thebibliography}

\end{document}